\newtheorem{theorem}{Theorem}
\newtheorem{lemma}{Lemma}
 \numberwithin{equation}{section}
\def\bysame{\leavevmode\hbox to3em{\hrulefill}\thinspace}
\newcommand{\be}{\begin{equation}}
\newcommand{\ee}{\end{equation}}
\newcommand{\s}{\sigma}
\renewcommand{\b}{\beta}
\renewcommand{\r}{\rho}
\newcommand{\g}{\gamma}
\newcommand{\z}{\zeta}
\newcommand{\tOmega}{\widetilde{\Omega}}
\renewcommand{\a}{\alpha}
\renewcommand{\d}{\delta}
\newcommand{\e}{\epsilon}
\renewcommand{\l}{\log}
\renewcommand{\L}{\Lambda}
\renewcommand{\Re}{{\mathfrak{R}\,}}
\renewcommand{\Im}{{\mathfrak{I}\,}}
\newcommand\ds{\displaystyle}
\newcommand\zpz{\frac{\zeta'}{\zeta}}
\def\AB{\bigg|}
\def\({\left(}
\def\){\right)}
\def\intl{\int\limits}
\def\bOmega{{\boldsymbol \Omega}}
\newcommand{\meas}{\mathop{\rm meas}}
\renewcommand\be{\begin{equation}}
\renewcommand\ee{\end{equation}}
\newcommand\bea{\begin{eqnarray}}
\newcommand\eea{\end{eqnarray}}
\newcommand\bi{\begin{itemize}}
\newcommand\ei{\end{itemize}}
\newcommand\ben{\begin{enumerate}}
\newcommand\een{\end{enumerate}}
\newcommand\bes{\begin{equation*}}
\newcommand\ees{\end{equation*}}
 \def\C {{\mathbb C}}
\def\R {{\mathbb R}}
\def\E {{\mathcal E}}
\title[The distribution of $\zeta'/\zeta(s)$]
 {The distribution of the logarithmic derivative \\
 of the Riemann zeta-function} 
\author{S. J. Lester}
\address{Department of Mathematics, University of Rochester, Rochester, NY 14627 USA}
\curraddr{School of Mathematical Sciences, Tel Aviv University, Tel Aviv, 69978 Israel}
\email{slester@post.tau.ac.il}
\subjclass[2010]{11M06.}
\thanks{The author was supported in part by the NSF grant DMS-1200582.}
\begin{document}
\maketitle

\begin{abstract}
We investigate the distribution of the logarithmic derivative of
the Riemann zeta-function on the line $\Re(s)=\s$, where $\sigma$
lies in a certain range near the critical line $\s=1/2$.
For such $\sigma$, we show that the distribution of
$\zeta'/\zeta(s)$ converges to a two-dimensional
Gaussian distribution in the complex plane.  
Upper bounds on the rate of convergence to
the Gaussian distribution are also obtained.
\end{abstract}

\section{Introduction}
Let $\z(s)$ denote the Riemann zeta-function with $s=\s+it$ a complex variable. 
Throughout we let $T$ denote a sufficiently large parameter.

In unpublished work, A. Selberg proved that the logarithm of the Riemann 
zeta-function on and near the line $\s=1/2$ is normally distributed.  For 
$0\leq (2\s-1) =o(1)$ as $T \rightarrow \infty$, $\Psi(\s)=\frac12 \sum_{p \leq T} p^{-2\s}$, and real numbers $a<b$, he showed that
\[
 \meas\bigg\{ t \in [0, T] \, : \, \l |\z(\s+it)| \Psi(\s)^{-1/2} \in [a,b] \bigg\}
 =\frac{T}{2\pi} \int_a^b \! e^{-x^2/2} \, dx +O\bigg( T \frac{(\l \Psi(\s))^2}{\sqrt{ \Psi(\s)}}\bigg)
\]
and
\[
 \meas\bigg\{ t \in [0, T] \, : \, \arg \z(\s+it) \Psi(\s)^{-1/2} \in [a,b] \bigg\}
 =\frac{T}{2\pi} \int_a^b \! e^{-x^2/2} \, dx +O\bigg( T \frac{\l \Psi(\s)}{\sqrt{\Psi(\s)}}\bigg),
\]
where $\meas$ denotes Lebesgue measure (see ~\cite{S}).
Although Selberg did not publish proofs of these results, his student K. M. Tsang gave the details of Selberg's argument 
in his PhD thesis ~\cite{Ts}.  These theorems may also be proved, albeit with larger error terms, by  the method of A. Ghosh in ~\cite{Gh1}
and ~\cite{Gh}.

The purpose of this article is to investigate the distribution of the logarithmic
derivative of the Riemann zeta-function near the critical line $\s=1/2$. 

The distribution of $\zeta'/\zeta(\sigma+it)$ was also  studied by C. R. Guo~\cite{G},
who showed the following.  Write $Q(x,y)=\tilde Q(x+iy)$, where  $\tilde Q: \mathbb{C} \rightarrow \mathbb{R}$ is infinitely differentiable in $x$ and $y$ and has compact support. Then for any
$0< \e <1/6$ and  $\s \in [1/2+(\l T)^{-(1/6-\e)}, 2]$ 
Guo showed that
\[
\frac1T \int_0^T \! \tilde Q\bigg(\frac{\z'}{\z}(\s+it)\bigg) \, dt= 
\iint_{\mathbb{R}^2} \! Q(x,y)f(x,y) \, dx dy+E(T, Q,\s, \e).
\] 
Here $f(x,y)$ is the Fourier transform of 
\[
\ds \prod_p 
\bigg( \int_0^1 \exp\bigg(2\pi i u \l p \sum_m \frac{\cos(2 \pi m t)}{p^{m\s}}-2\pi i v \l p 
\sum_m \frac{\sin(2 \pi m t)}{p^{m\s}} \bigg)\, dt \bigg)
\]
and 
\[
E(T, Q,\s, \e) \ll \exp(-\tfrac14(\l T)^{\frac23(\frac16-\e)}) \iint_{A} 
\!|\widehat{Q}(\a, \b)  |\, d\a d\b+\iint_{\mathbb{R}^2 \setminus A} \!|\widehat{Q}(\a, \b)  |\, d\a d\b,
\]
where $A$ is the square
\[
A= [-(\l T)^{3\e-\eta},\ (\l T)^{3\e-\eta}   ] \times  [-(\l T)^{3\e-\eta},\ (\l T)^{3\e-\eta}   ]
\]
with $\eta>0$ arbitrary.
 
In this paper we calculate the distribution function of $\z'/\z(\s+it)$
for $\s=1/2+\psi(T)/\l T$ with   $\psi(T)$  any positive function tending to infinity  
in such a way that  $\psi(T)= o(\l T)$ as $T\to\infty$. We also show that
in this range the distribution of $\z'/\z(\s+it)$ converges
to a two-dimensional Gaussian distribution in the complex plane.   
Finally, for rectangles
 with sides parallel to the coordinate axes
and for disks centered at the origin, we give explicit
upper bounds on the rate of convergence to the normal
distribution.  We should mention that our results are consistent with Guo's work:
one can show that the probability density function $f(x,y)$ 
in Guo's theorem does indeed converge to a two-dimensional
Gaussian when $2\s-1=o(1)$ as $T \rightarrow \infty$.

Throughout we write $\vec{u}=(u,v) \in \mathbb{R}^2$ and, if  $z \in \mathbb{C}$, 
we use the non-standard notation $\vec{u} \cdot z =u \Re(z)+v \Im(z)$.   We define $\mathbf{1}_A(\a)$ 
to be the indicator function of the set $A$, which is equal to one if $\a \in A$ and 
is equal to zero if $\a \notin A$.  
If $\theta=(\theta_1, ..., \theta_n) \in [0,1]^n$, we write
\[
\int\limits_{\mathbb{T}^n} \! F(\theta) \, d\theta=
\int_0^1 \cdots \int_0^1 F(\theta_1, ..., \theta_n) \, d\theta_1 \cdots d\theta_n.
\]
Finally, for $f \in L^1(\R^n)$ we define the Fourier transform of $f$ by
\[
\widehat f(x)=\intl_{\R^n} f(\xi) e(-x \cdot \xi) \, d\xi,
\]
where $e(x)=e^{2\pi ix}$.

\section{Main Results and a Summary of the Method}

Let $\psi(T)=(2\s-1) \l T$ and for $\psi(T) \geq 1$ define
\[
V=V(\s)= \ds \frac12 \sum_{n=2}^{\infty} \frac{\L^2 (n)}{n^{2\s}}. 
\]
When  $\psi(T)$  tends to infinity with $T$, we may think of $V$ 
as the variance of $ \z'/\z(\s+it)$. For if $\s$ is in this range and the Riemann 
hypothesis is true, it follows from equation (1.2) of Selberg~\cite{Sel} that  
\[
\frac1T \int_0^T \! \bigg| \frac{\z'}{\z}(\s+it) \bigg|^2 \, dt \sim \sum_{n=2}^{\infty} \frac{\L^2 (n)}{n^{2\s}}.
\]
With this in mind, we consider the normalized function $\z'/\z(\s+it)V^{-1/2}$.  Our first main result is
\begin{theorem} \label{theorem 1.1}
Let $\psi(T)=(2\s-1) \l T$, and 
\[
\bOmega=e^{-10}\min\big(V^{3/2},(\psi(T)/ \l \psi(T))^{1/2}\big).  
\]
Suppose that $\psi(T)\to \infty$ with $T$,  $\psi(T)=o(\l T)$,
 and that $R$ is  a rectangle in $\mathbb C$ whose sides are parallel to the coordinate axes
and have length greater than $\bOmega^{-1}$. 
Then we have
\be \notag
\begin{split}
\meas\bigg\{t \in (0, T) : \frac{\z'}{\z}(\s+it) V^{-1/2} \in R \bigg\}
=\frac{T}{2\pi}\iint_R \! e^{-(x^2+y^2)/2} \, dx \, dy +O\left(T\frac{(\meas(R)+1)}{\bOmega}\right).
\end{split}
\ee
\end{theorem}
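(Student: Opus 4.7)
The plan is to proceed via the characteristic function
\[
\Phi_T(\vec u)=\frac{1}{T}\int_0^T e\!\left(\vec u\cdot\frac{\z'}{\z}(\s+it)V^{-1/2}\right)dt,
\]
showing that $\Phi_T(\vec u)$ is close to the two-dimensional Gaussian characteristic function $\exp(-2\pi^2|\vec u|^2)$ uniformly for $\vec u$ in a box of side $\bOmega$, and then converting this pointwise approximation into the measure estimate for rectangles by means of a two-dimensional Beurling--Selberg smoothing of $\mathbf{1}_R$.

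The first step is to approximate $\z'/\z(\s+it)$, for $t$ outside an exceptional set of controlled measure, by a short Dirichlet polynomial
\[
D_X(\s+it)=-\sum_{n\leq X}\frac{\L(n)}{n^{\s+it}},
\]
for a suitable power $X$ of $T$. Unconditionally, this is done using an explicit-formula--type representation of $\z'/\z$ combined with a mean-square bound on the tail, the exceptional set being controlled by the distance from $\s+it$ to the nearest zero of $\z$. That $\s-1/2=\psi(T)/\log T$ with $\psi(T)\to\infty$ is essential here, since it keeps us at a safe distance from the critical line; the factor $(\psi(T)/\log\psi(T))^{1/2}$ in $\bOmega$ arises precisely from the size of this exceptional set and the accuracy of the approximation $\z'/\z\approx D_X$.

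Once $D_X$ replaces $\z'/\z$, I would expand the exponential in $\Phi_T(\vec u)$ as a Taylor series and use the orthogonality relation $\frac1T\int_0^T(m/n)^{it}dt=\d_{m=n}+O((T\log(n/m))^{-1})$ to decouple the contributions of distinct primes. What emerges, up to negligible off-diagonal terms, is a product
\[
\prod_{p\leq X}\int_0^1\exp\!\left(-\frac{2\pi i\log p}{V^{1/2}}\sum_{m\geq 1}\frac{u\cos(2\pi m\theta)-v\sin(2\pi m\theta)}{p^{m\s}}\right)d\theta,
\]
reflecting that $\{p^{-it}\}_p$ behave like independent uniform random variables on the unit circle as $t$ varies. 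A second-order Taylor expansion of each factor, together with the definition $V=\frac12\sum_n\L(n)^2/n^{2\s}$, yields $\exp(-2\pi^2|\vec u|^2)$ after using Parseval on $[0,1]$, while the cubic remainder in the exponent is bounded by $|\vec u|^3V^{-3/2}$ times a convergent prime sum; this is the source of the other constraint $\bOmega\leq V^{3/2}$.

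For the Fourier-inversion step I would construct two-dimensional Beurling--Selberg majorants and minorants $F_\pm$ of $\mathbf{1}_R$ with $\widehat{F_\pm}$ supported in $[-\bOmega,\bOmega]^2$ and $\int (F_+ - F_-)\ll\meas(\partial R)/\bOmega\ll(\meas(R)+1)/\bOmega$, using that the sides of $R$ exceed $\bOmega^{-1}$. Parseval then reduces the proof to bounding
\[
\int_{[-\bOmega,\bOmega]^2}\widehat{F_\pm}(\vec u)\bigl(\Phi_T(\vec u)-e^{-2\pi^2|\vec u|^2}\bigr)d\vec u,
\]
which is handled by the moment estimates of the previous paragraph. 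The principal obstacle is calibrating the Dirichlet-polynomial approximation, the choice of $X$, and the size of the exceptional set so that, uniformly for $\vec u$ in the box of side $\bOmega$, both the approximation error and the cubic Taylor remainder contribute at most $O(1/\bOmega)$ after integrating against $\widehat{F_\pm}$. It is the poles of $\z'/\z$ at the zeros of $\z$, absent from Selberg's analogous argument for $\log\z$, that make this balancing the delicate part of the proof and that are ultimately responsible for the explicit form of $\bOmega$.
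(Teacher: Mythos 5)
Your overall strategy is the right one and matches the paper's: Selberg's explicit formula plus a zero-density estimate reduces $\zeta'/\zeta$ to a short Dirichlet polynomial off a small exceptional set; Montgomery--Vaughan orthogonality converts the moments of that polynomial into moments of the random Euler product $S(\theta)$; a Taylor expansion of $\log$ of each Euler factor yields the Gaussian characteristic function with a cubic remainder $\ll(|u|+|v|)^3 V^{-3/2}$; and a Beurling--Selberg smoothing of $\mathbf{1}_R$ with band-limited Fourier transform converts this into the measure estimate. (Using a genuine two-dimensional majorant/minorant rather than a product $F(x)G(y)$ of one-dimensional ones, as the paper does, is a cosmetic difference.)

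But there is a genuine gap in the range of $\vec u$. You want the Gaussian approximation for $\Phi_T(\vec u)$ to hold for $|u|,|v|\leq\bOmega$, where $\bOmega\asymp\min\bigl(V^{3/2},(\psi/\log\psi)^{1/2}\bigr)$, and you set the Beurling--Selberg bandwidth $\delta=\bOmega$. The Taylor/logarithm expansion of each Euler factor, however, is only convergent and only produces a small cubic remainder when $|u|+|v|\ll V^{1/2}$; this is exactly the constraint $|u|,|v|\leq\Omega\asymp\min\bigl(V^{1/2},(\psi/\log\psi)^{1/2}\bigr)$ in the paper's Theorem \ref{theorem 2}. Since $(2\s-1)=o(1)$ forces $V\to\infty$, one has $V^{3/2}\gg V^{1/2}$, so $\bOmega$ can exceed $\Omega$ by a factor of order $V$. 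On the intermediate range $\Omega<\max(|u|,|v|)\leq\bOmega$ your argument supplies no information about $\Phi_T$, yet $\widehat{F_\pm}$ is of size up to $\meas(R)$ there, and the trivial bound $|\Phi_T|\leq 1$ would contribute $\meas(R)\,\bOmega^2$ to the error, which is far too large. You seem to have conflated ``the cubic remainder is $\ll|\vec u|^3 V^{-3/2}$, which after integrating against the Gaussian costs $\meas(R)/V^{3/2}\asymp\meas(R)/\bOmega$'' (true, and the actual source of $V^{3/2}$ in $\bOmega$) with ``the expansion is valid up to $|\vec u|\asymp V^{3/2}$'' (false).

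What is missing is a separate \emph{upper bound} for $\Phi_T(\vec u)$ in the regime $\Omega<|u|,|v|\leq\bOmega$ (the paper's Lemma \ref{tail end}): one shows $|\Phi_T(\vec u)|\ll e^{-c(u^2+v^2)}+\psi(T)^{-10}$ up to the larger scale $\tOmega$. This requires a different treatment of the Euler product: one cannot expand the logarithm of the factor at small primes $p$, so one discards the primes $p<D$ for a suitable $D$ growing with $|u|+|v|$ (using only that those factors have modulus at most one), and expands the logarithm only for $p\geq D$, where $(|u|+|v|)\l p/(\sqrt V p^\s)$ is genuinely small. Obtaining $\gg(u^2+v^2)$ in the resulting exponent then needs an explicit lower bound for $\sum_{D\leq p\leq x}\l^2p\, p^{-2\s}$, which is where the extra scale $\tOmega$ (involving $e^{\s/(2\s-1)}$) comes from. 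Without this, the bandwidth $\delta$ must be taken at most $\Omega$, and the Beurling--Selberg error $\int(F_+-F_-)\ll 1/\delta$ then costs $1/\Omega\asymp V^{-1/2}$, which is weaker than the claimed $1/\bOmega\asymp V^{-3/2}$ bound. A smaller secondary point: your chain $\int(F_+-F_-)\ll\meas(\partial R)/\bOmega\ll(\meas(R)+1)/\bOmega$ is not correct as written (take one side of $R$ near $\bOmega^{-1}$ and the other large), although this is repairable because what is actually needed is the integral of $F_+-F_-$ against the bounded Gaussian density, not its $L^1$ norm.
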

In the range 
	\[
	\bigg(\frac{\l \l T}{ \l T }\bigg)^{1/7}\ll (2\s-1) =o(1),
	\]
 the error term is of order 
$T (\meas(R)+1)V^{-3/2}$, while for $\s$ closer to $1/2$, it is of size 
$T(\meas(R)+1) (\l \psi(T)/\psi(T))^{1/2}$.  At the cost of a longer proof, the
condition that the length of each side of the rectangle $R$ should be  greater than $\bOmega^{-1}$ 
could be removed.

It does not seem possible to prove a distribution theorem for  $\z'/\z(\s+it)$ when $ (2\s-1) \l T \ll 1$ 
without the assumption of some unproven hypothesis. For in this range the    
moments of  $\zeta'/\zeta(\s+it)$ depend on  correlations
 of the zeros of the Riemann zeta-function.  
Goldston, Gonek, and Montgomery ~\cite{Go} 
proved, under the assumption of the Riemann hypothesis, that for $T^{-1} \l^3 T \leq a \ll 1$,  
\[
\int_0^T \, \bigg|  \frac{\z'}{\z}\Big(\frac12+\frac{a}{\l T}+it\Big)\bigg|^2 \, dt 
\sim \bigg(\frac{1-e^{-2a}}{4a^2} + \int_1^{\infty} (F(\a, T)-1)e^{-2a\a} d\a \bigg) T \l^2 T.
\]
Here $F(\a, T)$ is defined by
\[
F(\a, T)=\frac{1}{\frac{T}{2\pi}\l T} \sum_{0 < \g, \g' \leq T} T^{i\a(\g-\g')}w(\g-\g'),  
\]
where $w(x)=4/(4+x^2)$ and the sum is over pairs of ordinates of zeros
of the Riemann zeta-function. (For more on $F(\a, T)$ see ~\cite{M}.) 
Moreover, D. W. Farmer \emph{et al.} ~\cite{F} have recently proved that if the 
Riemann hypothesis  and some additional plausible hypotheses about the zeros are true, 
then for $(2\s-1) \l T \approx 1$,  
 the even moments of $|\z'/\z(\s+it)|$ may be expressed in terms of   correlations of the zeros
of $\z(s)$.  The form of the answers suggests that  $\z'/\z(\s+it)V^{-1/2}$ is unlikely to be normally
distributed when $(2\s-1)\l T \approx 1$.

In applications it is useful to have an analogue of 
Theorem \ref{theorem 1.1} in a disk.
For this reason, we also prove
\begin{theorem} \label{theorem disk 1}
Let $\psi(T)=(2\s-1) \l T$, 
\[
\bOmega=e^{-10} \min\big(V^{3/2}, (\psi(T)/ \l \psi(T))^{1/2}\big).  
\]
Suppose that $\psi(T)\to \infty$ with $T$,  $\psi(T)=o(\l T)$,
 and that  $r$ is a real number such that $r \bOmega \geq 1$.
Then we have
\be \label{main measure 1}
\begin{split}
\meas \bigg\{ t \in (0, T) : \bigg|\frac{\z'}{\z}(\s+it)  \bigg| \leq \sqrt V r  \bigg \}
=  \, T(1-e^{-r^2/2}) +O\(T\(\frac{r^2+r}{\bOmega}\)\).
\end{split}
\ee
If, in addition, we let $\widetilde \bOmega= 
\min\big((2\s-1) e^{\s/(2\s-1)},e^{-10}(\psi(T)/ \l \psi(T))^{1/2}\big)$,   then we have  for $r \widetilde \bOmega \geq 1$
\be \label{small measure 1}
\meas\bigg\{ t \in [0, T] : \bigg|\frac{\z'}{\z}(\s+it) \bigg| \leq \sqrt{V} r \bigg\} 
\ll Tr^2.
\ee
\end{theorem}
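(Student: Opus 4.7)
The plan is to deduce both parts of Theorem~\ref{theorem disk 1} from the rectangular version, Theorem~\ref{theorem 1.1}, by approximating the disk $B(0,r)$ (in the normalized coordinates $w=\z'/\z(\s+it)V^{-1/2}$) by disjoint unions of axis-parallel rectangles.

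For \eqref{main measure 1}, I would slice $[-r,r]$ on the vertical axis into $N$ horizontal strips of height $h\geq\bOmega^{-1}$, and in each strip take the inscribed (resp.\ circumscribed) axis-parallel rectangle $R_k^-$ (resp.\ $R_k^+$). Then $\mathcal{R}^-:=\bigcup_k R_k^-\subset B(0,r)\subset\bigcup_k R_k^+=:\mathcal{R}^+$. Applying Theorem~\ref{theorem 1.1} to each rectangle and summing gives
\[
\meas\!\left\{ t\in(0,T):w\in\mathcal{R}^\pm\right\}
= \frac{T}{2\pi}\iint_{\mathcal{R}^\pm}\! e^{-(x^2+y^2)/2}\,dx\,dy + O\!\left(T\,\frac{\sum_k\meas(R_k^\pm)+N}{\bOmega}\right).
\]
Using $\sum_k\meas(R_k^\pm)=\pi r^2+O(rh)$ and $N=O(r/h)$, together with the identity $\iint_{B(0,r)}e^{-(x^2+y^2)/2}/(2\pi)\,dx\,dy=1-e^{-r^2/2}$ and the bound $O(rh)$ on the Gaussian measure of the annular symmetric difference $\mathcal{R}^+\setminus\mathcal{R}^-$, one recovers the main term $T(1-e^{-r^2/2})$. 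The main technical obstacle is balancing the accumulated ``$+N$'' contribution from applying Theorem~\ref{theorem 1.1} to each piece against the geometric error from replacing a circle by a staircase, so that both stay within $O(T(r^2+r)/\bOmega)$ uniformly over $r\bOmega\geq 1$; a strip height that varies with the position along the disk (taller strips near the top and bottom, shorter ones near the equator) is likely needed, and the extremal case $r\asymp\bOmega^{-1}$ is the most delicate.

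For the second part \eqref{small measure 1}, where the weaker hypothesis $r\widetilde{\bOmega}\geq 1$ allows $r$ smaller than Theorem~\ref{theorem 1.1} directly handles, I would bypass the disk approximation and instead work with the truncated Dirichlet polynomial $P(\s+it)=-\sum_{n\leq Y}\L(n)/n^{\s+it}$ for a suitable $Y$; the form of $\widetilde{\bOmega}$, and in particular the factor $(2\s-1)e^{\s/(2\s-1)}$, suggests $Y$ of size roughly $e^{1/(2\s-1)}$. On a set of $t\in(0,T)$ of essentially full measure, $\z'/\z(\s+it)=-P(\s+it)+O(1)$ holds, and the measure of $t$ with $|P(\s+it)|\leq\sqrt{V}r$ can be bounded by $O(Tr^2)$ using a standard mean-square estimate for Dirichlet polynomials together with the near-Gaussian distribution of $P$ on this range via the same Selberg-type moment computation underlying Theorem~\ref{theorem 1.1}. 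The principal obstacle is controlling the exceptional set on which the approximation $\z'/\z\approx -P$ fails; this should follow from zero-density or convexity estimates exploiting that $(2\s-1)\l T\to\infty$, but verifying that this set contributes $O(Tr^2)$ and not more is where one must work hardest.
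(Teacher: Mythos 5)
Your approach differs substantially from the paper's: the paper applies the two-dimensional Beurling--Selberg majorant/minorant for a Euclidean ball, due to Holt and Vaaler (Lemma~\ref{magic functions}), together with Fourier inversion and the characteristic function bounds (Theorem~\ref{theorem 2} and Lemma~\ref{tail end}), and never decomposes the disk into rectangles. The Holt--Vaaler construction gives $\int_{\R^2}(F_+-F_-)\ll r/\delta$ directly, which is the source of the error term $O(Tr/\bOmega)$.

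Your staircase reduction to Theorem~\ref{theorem 1.1} has a genuine quantitative gap. Each application of Theorem~\ref{theorem 1.1} contributes an irreducible $O(T/\bOmega)$ from the ``$+1$'' in $O\big(T(\meas(R)+1)/\bOmega\big)$, so using $N$ strips of height $h$ accumulates $O(TN/\bOmega)=O\big(Tr/(h\bOmega)\big)$. Meanwhile the Gaussian measure of the symmetric difference between $\mathcal{R}^+$ and $\mathcal{R}^-$ is of order the total staircase area, which is $\asymp rh$, contributing $O(Trh)$. Balancing $r/(h\bOmega)$ against $rh$ forces $h\asymp\bOmega^{-1/2}$ and yields a total error $O\big(Tr^2/\bOmega + Tr/\sqrt{\bOmega}\big)$, and the second term dominates $Tr/\bOmega$ by a factor $\sqrt{\bOmega}$. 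Thus for $\bOmega^{-1}\leq r<\sqrt{\bOmega}$, your method provably delivers a weaker bound than the stated $O\big(T(r^2+r)/\bOmega\big)$. Using variable strip heights does not help: the Gaussian density is constant on the circle $\{|\vec x|=r\}$, so there is no region of the boundary where a larger geometric error can be afforded, and the $+1$ contribution from each strip cannot be removed by reshaping. This loss of $\sqrt{\bOmega}$ is intrinsic to approximating a disk by axis-parallel rectangles and cannot be repaired without a genuinely two-dimensional extremal-function construction, which is exactly what Holt--Vaaler provides.

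For \eqref{small measure 1}, the idea of working with the Dirichlet polynomial $-\sum_{n\leq x}\L(n)n^{-\s-it}$ is sound and is what happens implicitly behind Lemma~\ref{tail end}, but the sketch is not sufficient as stated. A mean-square estimate controls the measure of the set where $|P|$ is \emph{large}; it cannot bound the measure of the set where $|P|$ is \emph{small}, which is an anti-concentration statement. To convert the characteristic function bound into a small-ball estimate you still need to majorize $\mathbf{1}_{D_r(0)}$ by a band-limited function and invert, and at that point one is back to the paper's argument with $F_+$ from Lemma~\ref{magic functions}: the upper bound $\widehat F_+\ll r^2$ on the Fourier transform, integrated against the decaying characteristic function over $D_{\tOmega}(0)$, yields $O(Tr^2)$ directly, with the factor $(2\s-1)e^{\s/(2\s-1)}$ in $\widetilde{\bOmega}$ coming from the choice of cutoff $D$ in Lemma~\ref{tail end}. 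You should make this inversion step explicit rather than appealing to a mean-square estimate.
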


One of the two main components of the proofs
of Theorem \ref{theorem 1.1} and Theorem \ref{theorem disk 1} is
an approximate formula for the characteristic function (ch.f.) of  
$ \z'/\z(\s+it)V^{-1/2}$.
\begin{theorem} \label{theorem 2}
Let $\psi(T)=(2\s-1) \l T$, and 
\[
\Omega= e^{-10}\min\big(V^{1/2},(\psi(T)/ \l \psi(T))^{1/2}\big).
\]
Suppose that $\psi(T)\to \infty$ with $T$,  $\psi(T)=o(\l T)$, and  $ |u|,|v|\leq \Omega$. 
Then 
\bes \label{c.f. formula for ld}
\frac1T \int_0^T \, e\bigg(\vec{u} \cdot\frac{\z'}{\z}(\s+it)V^{-1/2}\bigg) \, dt
=e^{-2 \pi^2(u^2+v^2)}(1+\mathcal E_A(u,v))+\mathcal E_B,
\ees
where 
\[
\mathcal E_A(u,v)\ll \frac{(|u|+|v|)^3}{V^{3/2}} +\frac{u^2+v^2}{\psi(T)^{10}} \qquad
 \mbox{and} \qquad \mathcal E_B \ll  \psi(T)^{-10}.
\]
\end{theorem}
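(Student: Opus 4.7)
My plan is to follow the standard route to Gaussian CLTs for $\z'/\z$: approximate the logarithmic derivative by a short Dirichlet polynomial over primes and prime squares, exploit the pseudo-independence of the factors $p^{-it}$ (as $t$ varies over $[0,T]$) to factor the characteristic function as an Euler product, and then perform a local Taylor analysis at each prime.  The first step would be to invoke a Selberg-type approximate formula
\[
\frac{\z'}{\z}(\s+it)=-\sum_{n\leq X}\frac{\L_X(n)}{n^{\s+it}}+\text{error},
\]
where $\L_X$ is a smoothed von Mangoldt function and $X$ is a suitable small power of $T$ (roughly $T^{c/\psi(T)}$).  Such a formula is valid for $t$ outside an exceptional set $\mathcal{E}\subset[0,T]$ of measure $\ll T/\psi(T)^{10}$; the contribution of $\mathcal{E}$ to the integral is trivially $\ll\psi(T)^{-10}$ and becomes $\mathcal{E}_B$.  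Since cubes and higher prime powers contribute only $O(1)$ to $V$, one can further reduce to a polynomial $P(\s+it)$ supported on primes and prime squares with negligible loss.

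Next I would expand $e(\vec u\cdot P(\s+it)V^{-1/2})$ as a power series in $\vec u$ truncated at an order $k$ large enough that the remainder has size $(|u|+|v|)^{k+1}/V^{(k+1)/2}$.  Integrating term by term against $dt/T$ and applying a mean-value theorem for Dirichlet polynomials (essentially orthogonality of $n^{-it}$ for $n\leq X^k<T$), only the diagonal terms survive; these reassemble into the Euler product
\[
\prod_{p\leq X}\int_0^1 e\bigl(\vec u\cdot Z_p(\s,\theta_p)V^{-1/2}\bigr)\,d\theta_p,
\]
where $Z_p(\s,\theta_p)$ is a mean-zero complex trigonometric polynomial in $\theta_p$, essentially $-\sum_{m\geq 1}(\l p)/p^{m\s}\cdot e(-m\theta_p)$, up to an off-diagonal error absorbed into $\mathcal{E}_B$.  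Taylor-expanding each local factor, odd-order terms vanish after integration in $\theta_p$, the quadratic term contributes $-2\pi^2(u^2+v^2)V^{-1}\cdot\tfrac12\sum_{m\geq 1}(\l p)^2/p^{2m\s}$, and the cubic remainder is $O\bigl((|u|+|v|)^3(\l p)^3 p^{-3\s}V^{-3/2}\bigr)$.  Taking logarithms and summing over $p\leq X$, the quadratic contributions collapse (by the very definition of $V$) to $-2\pi^2(u^2+v^2)$, while the cubic errors sum to $O((|u|+|v|)^3/V^{3/2})$.  The extra $(u^2+v^2)/\psi(T)^{10}$ term in $\mathcal{E}_A$ comes from the error in replacing $\z'/\z$ by its Dirichlet-polynomial approximation on the complement of $\mathcal{E}$, paired with the linearization of the exponential in that small perturbation.

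The main obstacle will be calibrating the cut-off $X$ so that all three errors — the exceptional-set error in the approximate formula, the off-diagonal error from the mean-value theorem, and the cubic Taylor remainder — remain controlled uniformly for $|u|,|v|\leq\Omega$.  The constraint $V^{1/2}$ inside $\Omega$ should come from the cubic Taylor remainder (so that $(|u|+|v|)^3/V^{3/2}\ll 1$), while $(\psi(T)/\l\psi(T))^{1/2}$ reflects how large $X^k$ can be before off-diagonal mean-value contributions swamp the Euler product; the factor $e^{-10}$ is a safety buffer keeping both error terms small as $(u,v)$ saturates its range.  Verifying that these three errors balance as claimed, and that the Taylor truncation order $k$ can be chosen uniformly in $(u,v)$ throughout the allowed region, is where the bulk of the technical work will lie.
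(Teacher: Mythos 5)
Your outline follows the same route as the paper: Selberg's approximate formula with a weighted short Dirichlet polynomial (handling the exceptional set via a zero-density bound), a mean-value theorem for powers of Dirichlet polynomials to pass to the probabilistic model $S(\theta)=V^{-1/2}\sum_{p^n\leq x}\l p\, p^{-n\s}e(n\theta_p)$, and a Taylor/Euler-product analysis of each local factor giving the Gaussian with a cubic remainder, with $N$ and $x$ calibrated so that $x^{1-2\s}$ and the tail of the moment expansion are both $\ll\psi(T)^{-10}$. One small misattribution worth fixing: the $(u^2+v^2)\psi(T)^{-10}$ piece of $\mathcal E_A$ does not come from the Dirichlet-polynomial approximation of $\z'/\z$ (that error is additive and lands in $\mathcal E_B$); it comes from truncating the variance sum $\sum_{p^n\leq x}\l^2 p\, p^{-2n\s}$ at $x$ when comparing it to $2V$ in the local quadratic term.
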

Observe that in Theorem \ref{theorem 2} we 
may take $u$ or $v$ to be zero and obtain    
approximate formulas for the ch.f.   of 
$\Re \z'/\z(\s+it)$ and $\Im \z'/\z(\s+it)$.
Using these, we could easily prove that  analogues of Theorem~\ref{theorem 1.1}
hold for both the real and  imaginary  parts of $\z'/\z(s)$.

	Theorem \ref{theorem 2} implies that if $(2\s-1) \l T$   tends to 
infinity with $T$ and is also $o(\l T)$, then the ch.f. of 
$\z'/\z(\s+it)V^{-1/2}$ converges pointwise, as $T \rightarrow \infty$, 
to a two-dimensional Gaussian. Hence, for $\s$ in 
this range, it immediately follows
from standard probability theory 
that for any Borel measurable 
$\mathcal S \subset \C$ with positive Jordan content,
\[
\lim_{T \rightarrow \infty} \frac1T
\meas\bigg\{t \in [0, T] : \frac{\z'}{\z}(\s+it) V^{-1/2} \in \mathcal S \bigg\}
=\frac{1}{2\pi}\iint_{\mathcal S} \! e^{-(x^2+y^2)/2} \, dx \, dy.
\]  
To obtain Theorem
\ref{theorem 1.1} from
Theorem \ref{theorem 2}, we use Beurling-Selberg
functions, which are analytic approximations of
the signum function. They are also 
integrable along the real axis and have
Fourier transforms that vanish outside of an interval.
In the 1930's, Beurling discovered these functions but never
published his findings.  Independently, Selberg rediscovered
them and   used them in several contexts, one of which
was the study of the distribution of    $\log \z(s)$. 
For a discussion of these functions see Selberg ~\cite{Selb}.

The proof of Theorem \ref{theorem disk 1} is similar.
However, in this case we use Beurling-Selberg
functions that approximate the
indicator function of a disk.
These functions were introduced by Holt and Vaaler in ~\cite{Ho}, and 
their existence is
 a special case of a general theorem on Beurling-Selberg functions
 for balls in Euclidean space.  

To prove Theorem \ref{theorem 2} we start with a formula for $ \z'/\z(s)$ 
that was proved by Selberg in ~\cite{Se}. 
For $\s+it$ not too close to a zero of the
Riemann zeta-function, this formula expresses $\z'/\z(\s+it)$ 
as, essentially, the 
Dirichlet polynomial $ -\sum_{n \leq x}  {\L(n)}{n^{-\s-it}}$.  
If $\psi(T)=(2\s-1)\l T$ tends to infinity with $T$,  this formula holds for most $t$.  
This allows us to reduce our problem to calculating the 
ch.f. of $ -\sum_{n \leq x}  {\L(n)}{n^{-s}}$,  and we accomplish that 
by computing its  moments.

\section{The Characteristic Function of $\z'/\z(\s+it)$}
Our initial step is to express the ch.f. of $ \z'/\z(\s+it) V^{-1/2}$ 
in terms of the ch.f. of the Dirichlet
polynomial $ -V^{-1/2}\ \sum_{n \leq x}  {\L(n)}{n^{-\s-it}}$. 
Near a zero of the Riemann zeta-function $\z'/\z(s)$ cannot be approximated
by a Dirichlet polynomial, hence we need to bound
the possible contribution of $t \in (0, T)$, where $\s+it$
is close to a zero of $\z(s)$. In ~\cite{Se} Selberg
discovered a very clever way to do this.
Selberg's
 approach begins with an explicit formula for 
$\z'/\z(s)$. The explicit formula
contains two important terms, the first of which is a 
sum over primes similar to 
the Dirichlet polynomial above, while the second
is a sum over zeros of the Riemann zeta-function. 
Selberg showed  
that for certain values of $t$, the contribution 
of the sum over zeros
can be bounded in terms of a sum over primes.
Using this formula, we will prove 
\begin{lemma} \label{c.f. 1}
Suppose that  $10 \leq x \leq T^{1/18}$  and $1/2+4/\l x \leq \s \leq 2$. Then
\bes
\frac1T \int_0^T \! e\bigg(-\vec{u} \cdot \frac{\z'}{\z}(\s+it) V^{-1/2} \bigg) \, dt= 
\frac1T \int_0^T \, e\bigg(\vec{u} \cdot \sum_{n \leq x} \frac{\L(n)}{n^{\s+it}}V^{-1/2} \bigg) \, dt+ E_1
\ees
where 
\[
E_1 \ll (|u|+|v|)V^{-1/2}x^{(1/2-\s)/2} \l T+T^{-(\s-1/2)/3} \frac{\l T}{\l x}.
\]
\end{lemma}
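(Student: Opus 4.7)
The plan is to invoke Selberg's explicit formula from~\cite{Se} in order to replace $\zeta'/\zeta(\sigma+it)$ by the short Dirichlet polynomial $-\sum_{n\le x}\Lambda(n)n^{-\sigma-it}$ on a large subset of $[0,T]$, and then to exploit the Lipschitz property of the exponential to transfer this pointwise approximation to the characteristic-function integrals. Selberg's formula may be written in the shape
\[
-\frac{\zeta'}{\zeta}(\sigma+it) = \sum_{n\le x^2}\frac{\Lambda_x(n)}{n^{\sigma+it}} + R(\sigma+it,x),
\]
where $\Lambda_x(n)$ is the truncated von Mangoldt function (equal to $\Lambda(n)$ for $n\le x$ and to a smoothly weighted version for $x<n\le x^2$) and $R(\sigma+it,x)$ is a remainder expressible as a weighted sum over the non-trivial zeros of $\zeta$. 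Splitting off the $n\le x$ terms and calling everything else $S(\sigma+it,x)$, the task reduces to showing that $|S|$ is small in a controlled way for most $t\in[0,T]$.

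Next I would introduce the exceptional set $B=\{\,t\in[0,T]:|S(\sigma+it,x)|>\Delta\,\}$ with threshold $\Delta\asymp x^{(1/2-\sigma)/2}\log T$. The tail Dirichlet polynomial over $x<n\le x^2$ is controlled by the classical mean-value theorem for Dirichlet polynomials, which, under the hypothesis $x\le T^{1/18}$, gives an $L^2$ mean of size $\sum_{x<n\le x^2}\Lambda_x(n)^2 n^{-2\sigma}\ll x^{1-2\sigma}\log x$. The zero-sum remainder $R(\sigma+it,x)$ is the genuinely difficult piece: I would follow Selberg's idea of excising an $\eta$-neighbourhood of each zero ordinate $\gamma$, with $\eta$ chosen so that the total excluded measure is $\ll T^{1-(\sigma-1/2)/3}\log T/\log x$, and on the complement majorise $R$ by a second prime sum of mean square $\ll x^{1-2\sigma}$. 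The hypothesis $\sigma\ge 1/2+4/\log x$ keeps the weights $x^{\rho-\sigma-it}$ bounded, while the lower bound $|t-\gamma|^{-1}\ll\eta^{-1}$ off the excluded set, combined with classical zero-density estimates near $\sigma=1/2$, completes the pointwise bound $|S|\le \Delta$ on $[0,T]\setminus B$.

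Finally, the elementary inequality $|e(\alpha)-e(\beta)|\le 2\pi|\alpha-\beta|$ together with the bilinearity of $\vec{u}\cdot(\cdot)$ yields
\[
\bigg|\, e\bigg(\!-\vec{u}\cdot\frac{\zeta'}{\zeta}(\sigma+it)V^{-1/2}\bigg) - e\bigg(\vec{u}\cdot\sum_{n\le x}\frac{\Lambda(n)}{n^{\sigma+it}}V^{-1/2}\bigg)\bigg| \ll (|u|+|v|)V^{-1/2}\,|S(\sigma+it,x)|,
\]
which on $[0,T]\setminus B$ contributes $\ll (|u|+|v|)V^{-1/2}\Delta$ to $E_1$, matching the first term of the claimed bound, while on $B$ the two exponentials differ trivially by at most $2$, contributing $\meas(B)/T$ and matching the second. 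I expect the main obstacle to be the analysis of the zero-sum remainder $R(\sigma+it,x)$: producing a bound strong enough pointwise off the exceptional set to give the stated $\Delta$, yet with the exceptional set itself of small enough measure to give the claimed saving $T^{-(\sigma-1/2)/3}\log T/\log x$. Once that is in hand, the Lipschitz reduction and triangle inequality assemble everything into the claimed estimate for $E_1$.
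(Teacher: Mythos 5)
Your overall architecture is the right one — Selberg's explicit formula, an exceptional set controlled by a zero-density estimate near $\sigma=1/2$ (the paper uses Jutila's), and the Lipschitz inequality $|e(\alpha)-e(\beta)|\le 2\pi|\alpha-\beta|$ to transfer the pointwise approximation to the characteristic-function integral. However, the step where you claim a \emph{pointwise} bound $|S(\sigma+it,x)|\le\Delta\asymp x^{(1/2-\sigma)/2}\log T$ on the complement of the exceptional set does not go through, and the fix is not cosmetic. Selberg's error term in (3.2) contains the prime sum $x^{(1/2-\sigma)/2}\big|\sum_{n\le x^3}\Lambda(n)n^{-\sigma_{x,t}-it}w_x(n)\big|$ (which, on the good set, one may take at exponent $\sigma$). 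That Dirichlet polynomial has mean square $\asymp\sum_{n\le x^3}\Lambda^2(n)n^{-2\sigma}\asymp V\asymp(2\sigma-1)^{-2}$, which in the range $2\sigma-1=o(1)$ is far larger than $\log^2 T$. Thus it is not bounded by $\log T$ pointwise, and capping it at $\log T$ by Chebyshev would excise a set of measure roughly $T\,V/\log^2 T$, which dwarfs the target $T^{1-(\sigma-1/2)/3}\log T/\log x$. Your concluding paragraph in fact anticipates this tension but does not resolve it.

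What the paper does instead — and what you should do — is never attempt a pointwise control of that prime sum. Define the good set directly by $B=\{t:\sigma_{x,t}\le\sigma\}$; its complement is bounded by the zero-density input (Lemma 2 of the paper, via Jutila), giving the second term of $E_1$, while on the complement the integrand is trivially bounded by $1$. On $B$, apply the Lipschitz step and then estimate the resulting error \emph{in $L^1$}: by Cauchy--Schwarz and the Montgomery--Vaughan mean-value theorem, $\int_0^T\big|\sum_{n\le x^3}\Lambda(n)n^{-\sigma-it}w_x(n)\big|\,dt\ll T\big(\sum_{n\le x^3}\Lambda^2(n)n^{-2\sigma}\big)^{1/2}\ll T\log T$ (the last step using $\sigma\ge 1/2+4/\log x$). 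This $L^1$ bound is exactly of size $T\Delta$ and yields the first term of $E_1$ without any pointwise claim. A parallel Cauchy--Schwarz/MVMVT estimate for $\sum_{x<n\le x^3}\Lambda(n)w_x(n)n^{-\sigma-it}$ removes the $w_x$-weighted tail, which is also handled in $L^1$. (Minor recall slip: Selberg's Dirichlet polynomial runs to $x^3$, not $x^2$.)
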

Throughout we let
$\r=\b+i\g$ denote 
a zero of the Riemann zeta-function.
To state
the explicit formula for $\z'/\z(s)$,
we first define  the number 
\be \label{sxt}
\s_{x, t}= \frac12+2 \max\Big(\b-\frac12, \frac{2}{\l x} \Big),
\ee
where $x \geq 2$ and $t>0$.
Here the maximum is taken over zeros $\r$ satisfying 
$\ds |t-\g| \leq x^{3|\b-1/2|}/\l x$.  
For $\s \geq \s_{x,t}$ and $2 \leq x \leq t^2$, A. Selberg proved 
(see equation  4.9 in ~\cite{Se}) that
\be \label{selberg}
\begin{split}
-\frac{\z'}{\z}(\s+it)=\sum_{n \leq x^3} \frac{\L(n)}{n^{\s+it}}w_x(n)+&
O\bigg(x^{ (1/2-\s)/2}\bigg|\sum_{n \leq x^3} \frac{\L(n)}{n^{\s_{x,t}+it}}w_x(n) \bigg| \bigg)\\
+&O(x^{ (1/2-\s)/2} \l t),
\end{split}
\ee
where 
\bes
w_x(n)=
\begin{cases} 
  1 & \mbox{if $n \leq x,$} \\
  \frac{\l^2 ({x^3}/{n})-2\l^2 ({x^2}/{n})}{2 \l^2 x} & \mbox{if $x < n \leq x^2,$} \\
  \l^2 ({x^3}/{n}) & \mbox{if $x^2 < n \leq x^3,$} \\
  0 & \mbox{if $n>x^3$}.
\end{cases}
\ees
One can easily modify Selberg's proof to show that an analogue of \eqref{selberg} holds
where $\s_{x,t}$ is replaced by $\s$ in the error term and we shall use this later in the proof of
Lemma \ref{c.f. 1}. (This fact merely simplifies
the proof.)

We will now show that if $(2\s-1) \l T$   tends to 
infinity with $T$ and is also $o(\l T)$, then  the measure of the 
set of $t \in (0, T)$ for which $\s_{x,t}>\s$ is 
$o(T)$.  Hence, for $\s$ in this range, \eqref{selberg}
holds almost everywhere, in the sense that
the proportion of $t \in (0, T)$ for which
the formula does not hold tends to zero as $T \rightarrow \infty$. 
We prove this by using a zero-density estimate of Jutila~\cite{Ju}, that is, 
an estimate for the number of zeros of $\z(s)$  with  
$\b>\s$ and $0< \g < T$.  Jutila's result is that for any $\e>0$,
\be\label{zero density 11}
N(\s, T)=\sum_{\substack{ 0<\g < T \\ \b >\s }} 1 \ll T^{1-(1-\e)(\s-1/2)} \l T.
\ee
We are now ready to prove   
\begin{lemma} \label{key lem}
Let $1/2+4/\l x \leq \s \leq 2$  and, for any fixed $0<\e<1$, 
let $ 10 \leq x \leq T^{\e/3}$. Then 
\bes
\meas\{ t \in [2, T] : \s_{x,t}> \s \} \ll 
T^{1-(1/2-\e)(\s-1/2)}\frac{\l T}{\l x},
\ees
where $\s_{x,t}$ is the number defined in \eqref{sxt}, 
and the implied constant depends only on $\e$.
\end{lemma}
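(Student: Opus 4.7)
The plan is to translate the pointwise condition $\s_{x,t}>\s$ into a union bound over zeros of $\z(s)$, and then estimate the resulting sum by partial summation combined with Jutila's zero density estimate \eqref{zero density 11}.

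First I would unpack the definition \eqref{sxt}. Since we are assuming $\s \geq 1/2 + 4/\l x$, we have $(\s-1/2)/2 \geq 2/\l x$, so the condition $\s_{x,t}>\s$ cannot be triggered by the ``$2/\l x$'' alternative inside the maximum; it must come from an actual zero $\r = \b + i\g$ satisfying $\b - 1/2 > (\s-1/2)/2$ and $|t-\g| \leq x^{3(\b-1/2)}/\l x$. Writing $\s_0 = (\s+1/2)/2$, a union bound therefore gives
\[
\meas\{ t \in [2,T] : \s_{x,t} > \s\} \leq \sum_{\substack{\b > \s_0 \\ |\g| \leq 2T}} \frac{2\, x^{3(\b-1/2)}}{\l x},
\]
where the enlarged range $|\g|\leq 2T$ accounts for zeros whose associated intervals intrude on $[2,T]$ (a routine adjustment, since the window length $x^{3(\b-1/2)}/\l x \ll T^{\e}$ is much smaller than $T$ in the relevant range).

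Next I would handle the weighted zero sum by partial summation in $\b$ using $N(\a,T)$, namely
\[
\sum_{\substack{\b > \s_0 \\ 0 < \g \leq 2T}} x^{3(\b-1/2)} = x^{3(\s_0 - 1/2)}N(\s_0, 2T) + 3(\l x)\int_{\s_0}^{1} x^{3(\a-1/2)} N(\a, 2T)\, d\a.
\]
Inserting Jutila's bound $N(\a, 2T) \ll T^{1 - (1-\e)(\a - 1/2)} \l T$ (with $\e$ replaced by a small quantity) turns the integrand into a constant multiple of $T\cdot (x^3/T^{1-\e})^{\a - 1/2}$. Here is where the hypothesis $x \leq T^{\e/3}$ does the crucial work: it forces $x^{3}/T^{1-\e} \leq T^{-1+2\e}$, so for $\e < 1/2$ the integrand decreases exponentially in $\a$. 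Hence both the integral and the boundary term are dominated by their values at $\a = \s_0$, yielding
\[
\sum_{\substack{\b > \s_0 \\ 0 < \g \leq 2T}} x^{3(\b-1/2)} \ll x^{3(\s_0 - 1/2)}\, T^{1 - (1-\e)(\s_0 - 1/2)} \l T.
\]
Dividing by $\l x$ and substituting $\s_0 - 1/2 = (\s - 1/2)/2$ converts the exponent of $T$ into $1 - (1-\e)(\s-1/2)/2$, while the $x^{3(\s-1/2)/2}$ factor, bounded via $x \leq T^{\e/3}$, contributes $T^{\e(\s-1/2)/2}$. Combining these gives the claimed bound $T^{1-(1/2-\e)(\s-1/2)}\l T / \l x$, with $\e$ replaced by an appropriate fresh parameter.

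The main obstacle is not any single step but rather the delicate bookkeeping between three exponents: the zero-density savings $(1-\e)(\s-1/2)$, the amplification $x^{3(\b-1/2)}$ coming from the width of the exceptional windows, and the restriction $x \leq T^{\e/3}$. One must check that these combine to give a genuine power saving for all $\s$ in the allowed range, and that the integrand really is monotonically decreasing so the integral is controlled by the endpoint $\a = \s_0$; both issues are resolved precisely by the hypothesis $x \leq T^{\e/3}$.
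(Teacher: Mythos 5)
Your proposal is correct and follows essentially the same route as the paper: unpack the definition of $\s_{x,t}$ to see the event is triggered by a zero with $\b > (\s+1/2)/2$, union-bound the measure of the exceptional set by the total window length $\sum 2x^{3(\b-1/2)}/\l x$, then evaluate the weighted zero sum by partial summation against Jutila's $N(\a,T)$, using $x\leq T^{\e/3}$ to make $x^3T^{\e-1}<1$ so the integral is controlled by its lower endpoint $\a=\s_0$. The paper carries out the same steps, with $\s'$ in place of your $\s_0$ and with a slightly different (but equivalent) handling of the $\g$-range via the symmetry of zeros about the real axis.
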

\begin{proof}
By the definition of $\s_{x,t}$,  if for some $t\geq 2$ we have that $\s_{x,t} > \s$,  
then there is a zero $\r_0$ such that $\b_0 >(\s-1/2)/2+1/2$ 
and $\ds |t-\g_0| \leq x^{3|\b_0-1/2|}/\l x$.  Furthermore, 
for each such $\r_0$ we have
\bes
\meas\bigg\{t \in [2, T] : |t-\g_0| \leq \frac{x^{3|\b_0-1/2|}}{\l x} \bigg\} 
\leq 2 \frac{x^{3|\b_0-1/2|}}{\l x}.
\ees
We also observe that if $2 \leq t \leq T$ and $x \leq T^{\e/3} < T^{1/3}$, 
then  $ - T^{1/2}/ \l T \leq \g_0 \leq T+  T^{1/2}/ \l T$. 
Now let $\s'=(\s-1/2)/2+1/2$. Combining our  observations, we find that
\be\label{zero bd 11}
\meas\{t \in[2,T] : \s_{x, t} > \s \} \ll 
\sum_{\substack{  -\frac{ T^{1/2}}{ \l T} \leq \g \leq T+\frac{ T^{1/2}}{ \l T} \\ \b > \s'}} 
\frac{x^{3(\b-1/2)}}{\l x}.
\ee
Using \eqref{zero density 11}, we see that
\be \notag 
\begin{split}
\sum_{\substack{  0 < \g \leq 2T \\ \b > \s'}} x^{3(\b-1/2)}  
 =&-(N(v, 2T) x^{3(v-1/2)})\biggr|_{\s'}^1+3\l x \int\limits_{\s'}^1 \! x^{3(v-1/2)}  N(v, 2T) \, dv \\  
  \ll & (T^{1-(1-\e)(\s'-1/2)} \l T) (T^{\e(\s'-1/2)})\\
&\qquad+ x^{-3/2} T^{1+ (1-\e)/2} (\l x) (\l T )
\int\limits_{\s'}^1 \!     (x^3T^{\e-1})^{v} \, dv \\  
  \ll& T^{1-(1-2\e)(\s'-1/2)} \l T.
\end{split}
\ee 

The zeros of $\z(s)$ are symmetric about the real axis, so by our previous estimate, 
\bes
 \sum_{\substack{  -\frac{ T^{1/2}}{ \l T} 
 \leq \g \leq T+\frac{ T^{1/2}}{ \l T} \\ \b > \s'}} \frac{x^{3(\b-1/2)}}{\l x} \leq \frac{2}{\l x} 
 \sum_{\substack{  0 < \g \leq 2T \\ \b > \s'}} x^{3(\b-1/2)} 
 \ll T^{1-(1/2-\e)(\s-1/2) } \frac{\l T}{\l x}.
\ees
The result follows from this and \eqref{zero bd 11}.
\end{proof}

We now are ready to prove Lemma \ref{c.f. 1}.

\begin{proof}[Proof of Lemma \ref{c.f. 1}]
Let $B=\{t \in [0, T] : \s \geq \s_{x, t}\}$. 
By our comment after \eqref{selberg}, for $t \in B$ 
we may replace $\s_{x,t}$ by $\s$ in the first error term in \eqref{selberg}. 
Also note that $|e(\a)-e(\b)| =2 \pi |  \int_{\a}^{\b} \! e(x) \, dx| \leq 2 \pi |\b-\a|$. 
Hence, by these observations and \eqref{selberg}, we see that
\bes
\int\limits_{B} \! e\bigg(-\vec{u} \cdot \frac{\z'}{\z}(\s+it) V^{-1/2}\bigg) \, dt
=\int\limits_{B} \! e\bigg(\vec{u} \cdot \sum_{n \leq x^3} \frac{\L(n)}{n^{\s+it}} w_x(n) V^{-1/2}\bigg) \, dt+E_2,
\ees
where
\bes
E_2 \ll (|u|+|v|)V^{-1/2} x^{ (1/2-\s)/2}
\bigg(\int_0^T \! \bigg |\sum_{n \leq x^3} \frac{\L(n)}{n^{\s+it}}w_x(n)\bigg|  \, dt+T \l T\bigg).
\ees
Recall that $w_x(n)=1$ for $n \leq x$ and that $0 \leq w_x(n) \leq 1$ always. 
Estimating the integral using Cauchy's inequality and then applying 
Montgomery and Vaughan's mean value theorem for Dirichlet polynomials 
~\cite{MVMVT}, we see that  
\bes
\begin{split}
E_2 \ll &(|u|+|v|) V^{-1/2} x^{ (1/2-\s)/2}
\bigg(T \bigg( \sum_{n \leq x^3}\frac{\L^2(n)}{n^{2\s}}\bigg)^{1/2}+ T\l T\bigg) \\
\ll & T (|u|+|v|) V^{-1/2} x^{ (1/2-\s)/2} \l T,
\end{split}
\ees
where the estimate of the sum follows 
from a calculation using the Prime Number Theorem.  Noting that $x\leq T^{1/18}$,
we have
\bes 
\begin{split}
\int\limits_{B} \! e\bigg(-\vec{u} \cdot \frac{\z'}{\z}(\s+it) V^{-1/2}\bigg) \, dt
=&\int\limits_{B} \! e\bigg(\vec{u} \cdot \sum_{n \leq x^3} \frac{\L(n)}{n^{\s+it}} w_x(n) V^{-1/2}\bigg) \, dt \\
&+O\big(T(|u|+|v|)V^{-1/2}x^{ (1/2-\s)/2} \l T \big).
\end{split}
\ees
Next, note that $|e(x)| = 1$ and, by Lemma \ref{key lem}, that
\[
\meas([0,T]\setminus B) \ll T^{1-  (\s-1/2)/3} \frac{ \l T}{\l x}.
\]
Thus, 
\bes
\begin{split}
\int_0^T \! e\bigg(-\vec{u} \cdot \frac{\z'}{\z}(\s+it) V^{-1/2}\bigg) \, dt=
\int\limits_{B} \! e\bigg(-\vec{u} \cdot \frac{\z'}{\z}(\s+it) V^{-1/2}\bigg) \, dt
+O\(T^{1- (\s-1/2)/3} \frac{ \l T}{\l x}\).
\end{split}
\ees
An analogue of this formula also obviously holds for 
$ V^{-1/2}\sum_{n \leq x}   {\L(n)}n^{-(\s+it)}$. Hence,
we obtain
\be \label{summary}
\begin{split}
\int_0^T \! e\bigg(-\vec{u} \cdot \frac{\z'}{\z}(\s+it) V^{-1/2}\bigg) \, dt
=& \int_0^T \! e\bigg(\vec{u} \cdot \sum_{n \leq x^3} \frac{\L(n)}{n^{\s+it}} w_x(n) V^{-1/2}\bigg) \, dt \\
&+O\big(T(|u|+|v|)V^{-1/2}x^{ (1/2-\s)/2} \l T \big)\\
&+O\(T^{1- (\s-1/2)/3} \frac{ \l T}{\l x}\).
\end{split}
\ee
  	 
We would next like to replace the weight $w_x(n)$ by $1$. Write
\bes
\int_0^T \! e\bigg(\vec{u} \cdot \sum_{n \leq x^3} \frac{\L(n)}{n^{\s+it}} w_x(n) V^{-1/2}\bigg) \, dt
=\int_0^T \! e\bigg(\vec{u} \cdot \sum_{n \leq x} \frac{\L (n)}{n^{\s+it}}V^{-1/2}\bigg) \, dt+E_3.
\ees
Using the estimate $|e(\a)-e(\b)| \ll |\b-\a|$, we see that
\bes
E_3 \ll (|u|+|v|) V^{-1/2} \int_0^T \,\bigg|\sum_{n \leq x^3} \frac{\L(n)}{n^{\s+it}} w_x(n) -   \sum_{n \leq x} \frac{\L (n)}{n^{\s+it}}\bigg| \, dt.
\ees
By Cauchy's inequality and Montgomery and Vaughan's mean value theorem for Dirichlet polynomials ~\cite{MVMVT}, we find that
\bea \notag
E_3 &\ll& T (|u|+|v|) V^{-1/2} \bigg(\sum_{ x \leq n \leq x^3} \frac{\L^2(n)}{n^{2\s}} \bigg)^{1/2} \\ \notag
& \ll & T (|u|+|v|) V^{-1/2} \frac{x^{1/2-\s}}{(2\s-1)^{1/2}} \l^{1/2} x\ll T (|u|+|v|) V^{-1/2} x^{1/2-\s} \l T
\eea
(the estimate of the sum follows from the Prime Number Theorem). The result now follows on 
combining this estimate and  \eqref{summary}.
\end{proof}

\subsection{A Dirichlet Polynomial Calculation}
Our next goal is to show that the ch.f. of $ \sum_{n \leq x}  {\L(n)}n^{-(\s+it)}$ is essentially a Gaussian. 

\begin{lemma} \label{c.f. 2}  Let $1/2+4/\l x \leq \s \leq 2$,  and $ x \leq T^{1/(5N)}$ 
be sufficiently large
with $N$ an even integer. Then for  $|u|,|v|\leq V^{1/2}/100$,  we have
\bes  
\frac1T \int_0^T \, e\bigg(\vec{u} \cdot \sum_{n \leq x} \frac{\L(n)}{n^{\s+it}}V^{-1/2} \bigg) \, dt=e^{-2 \pi^2(u^2+v^2)}(1+E_4(u,v))+E_5(u,v),
\ees
where 
\[
E_4(u,v)\ll \frac{(|u|+|v|)^3}{V^{3/2}} +(u^2+v^2)(x^{1-2\s}(2\s-1) \l x+(2\s-1)^2)
\]
and 
\[
E_5(u,v) \ll  \frac{(6 \sqrt{2} \pi(|u|+|v|))^N}{(N/2)!}+T^{-1/3}.
\]
\end{lemma}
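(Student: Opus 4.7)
My plan is to expand the characteristic function as a Taylor series in $\vec u\cdot D(t)$, where $D(t):=V^{-1/2}\sum_{n\le x}\Lambda(n)n^{-\sigma-it}$, compute the resulting moments, and match them termwise with the Taylor expansion of the target Gaussian $e^{-2\pi^2(u^2+v^2)}$. I begin with
\[
e(\vec u\cdot D(t))=\sum_{k=0}^{N-1}\frac{(2\pi i\,\vec u\cdot D(t))^k}{k!}+O\!\left(\frac{(2\pi|\vec u\cdot D(t)|)^N}{N!}\right),
\]
integrate in $t$, and bound the remainder by the $N$-th absolute moment of $\vec u\cdot D(t)$. Estimating that moment by the Gaussian moment $(N-1)!!\,(u^2+v^2)^{N/2}$ plus a small off-diagonal correction, and using $(N-1)!!\le N!/(2^{N/2}(N/2)!)$, produces the first term of $E_5$.

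Next, for $k\le N-1$ I would compute $M_k:=\tfrac1T\int_0^T(\vec u\cdot D(t))^k\,dt$. Writing $\vec u\cdot D(t)=\tfrac12\bigl(w\bar D(t)+\bar w D(t)\bigr)$ with $w=u+iv$ and expanding,
\[
(\vec u\cdot D(t))^k=\frac{1}{2^k V^{k/2}}\sum_{n_1,\ldots,n_k\le x}\prod_{j=1}^{k}\frac{\Lambda(n_j)}{n_j^\sigma}\sum_{\epsilon\in\{\pm1\}^k} w^{k_+}\bar w^{k_-}\,(P_\epsilon/Q_\epsilon)^{it},
\]
where $k_\pm:=|\{j:\epsilon_j=\pm1\}|$, $P_\epsilon:=\prod_{\epsilon_j=+1}n_j$, $Q_\epsilon:=\prod_{\epsilon_j=-1}n_j$. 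Integration in $t$ splits $M_k$ into a diagonal part ($P_\epsilon=Q_\epsilon$), contributing $T$ per summand, and an off-diagonal part whose integral is $O((T|\log(P_\epsilon/Q_\epsilon)|)^{-1})=O(x^k/T)$ because distinct positive integers at most $x^k$ satisfy $|\log(P/Q)|\gg 1/x^k$. Estimating the off-diagonal coefficient sum by Cauchy--Schwarz and the Montgomery--Vaughan mean value theorem~\cite{MVMVT}, and using $x\le T^{1/(5N)}$, the combined off-diagonal across $k\le N$ contributes $O(T^{-1/3})$, the second term of $E_5$.

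The heart of the proof is the diagonal evaluation. For $k=2m$ the dominant configurations are the perfect matchings of $\{1,\ldots,2m\}$ into $m$ pairs of opposite signs carrying a common prime, and counting them gives
\[
M_{2m}^{\mathrm{pair}}=\frac{(2m)!}{2^m m!}\,(u^2+v^2)^m\left(\frac{1}{2V}\sum_{p\le x}\frac{\log^2 p}{p^{2\sigma}}\right)^{m}.
\]
Replacing $\sum_{p\le x}\log^2 p/p^{2\sigma}$ by $2V=\sum_{n\ge 2}\Lambda^2(n)/n^{2\sigma}$ introduces a multiplicative factor $1+O((2\sigma-1)^2+x^{1-2\sigma}(2\sigma-1)\log x)$: the first error is the prime-power contribution to $V$ (bounded since $\sum_{p,a\ge 2}(\log p)^2/p^{2a\sigma}=O(1)$ while $V\gg(2\sigma-1)^{-2}$), and the second is the tail $p>x$, obtained from the prime number theorem by partial summation. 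Multiplying by $(2\pi i)^{2m}/(2m)!$ and summing in $m$ reproduces $e^{-2\pi^2(u^2+v^2)}$ with the claimed relative error, giving the second term of $E_4$. Every diagonal configuration that is not a perfect matching -- all odd $k$, as well as balanced tuples relying on genuine prime-power identifications such as $p^2=p\cdot p$ -- uses at least one extra copy of $\Lambda(n)/n^\sigma$ beyond the paired case, costing an extra factor of $V^{-1/2}$; collecting these across all $k$ contributes $O((|u|+|v|)^3/V^{3/2})$ to the characteristic function, the first term of $E_4$.

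The main technical obstacle will be the uniform-in-$k$ control of the non-pair diagonals: one needs a clean parametrization of balanced prime-power tuples -- say by a set partition of $\{1,\ldots,k\}$ into blocks on which a common prime appears -- together with a proof that any block other than a single $(+,-)$-pair on one prime costs an additional $V^{-1/2}$, uniformly over $k\le N$. Once that is in place, the three error sources (Taylor tail, off-diagonal, non-pair diagonal) combine to give $E_4$ and $E_5$ as stated.
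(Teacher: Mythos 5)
Your proposal takes a genuinely different route from the paper, and it has a gap that you yourself flag but do not close.

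The paper does not attempt the direct combinatorial evaluation of the diagonal that you propose. Instead it proceeds in two stages. First (Lemma~\ref{moments prop}, relying on Lemma~\ref{moments lem 11} and Tsang's extension of Montgomery--Vaughan), it matches the moments of the Dirichlet polynomial $f(t)$ against the moments of the probabilistic model $S(\theta)=V^{-1/2}\sum_{p^n\le x}(\log p)\,e(n\theta_p)/p^{n\sigma}$ with $(\theta_p)$ independent and uniform on $[0,1]$; this is what produces both the Taylor-tail term $(6\sqrt2\pi(|u|+|v|))^N/(N/2)!$ and the $T^{-1/3}$ term in $E_5$. Second (Lemma~\ref{normal dist lem}), it computes $\int_{\mathbb{T}^{\pi(x)}}e(\vec u\cdot S(\theta))\,d\theta$ directly: by independence this factors as an Euler product $\prod_{p\le x}\int_0^1 e(\cdots)\,d\theta_p$, each $p$-factor Taylor-expands to $1-M_p+R_p$ with $M_p\ll(u^2+v^2)\log^2p/(Vp^{2\sigma})$ and $R_p\ll(|u|+|v|)^3\log^3p/(V^{3/2}p^{3\sigma})$, and summing $\log(1-M_p+R_p)$ over $p$ yields $e^{-2\pi^2(u^2+v^2)}$ times $1+O((|u|+|v|)^3/V^{3/2})$, plus the tail/prime-power error that gives the second piece of $E_4$. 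The independence of the $\theta_p$ is precisely what absorbs the combinatorics you are trying to control by hand: the contribution of all non-pair configurations on a single prime is packaged into the single per-prime remainder $R_p$, and summing over $p$ gives the $V^{-3/2}$ bound with no $k$-dependence to track.

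The gap in your argument is exactly the step you label ``the main technical obstacle'': proving uniformly in $k\le N$ (where $N\to\infty$ with $T$) that every balanced configuration other than a perfect matching of opposite-sign pairs costs an extra $V^{-1/2}$, and then resumming over all $k$ and all set partitions. This is not a routine bookkeeping step. The number of partitions of $\{1,\dots,k\}$ with blocks of size at least $2$ grows superexponentially, the per-block constants $\sum_p\log^{\ell}p/p^{\ell\sigma}$ themselves grow in $\ell$, and you need all of this to stay bounded against the factorial in $(2\pi i)^k/k!$ with uniform constants. Without a precise parametrization and a careful induction on block structure, the claim ``any block other than a single $(+,-)$-pair on one prime costs an additional $V^{-1/2}$, uniformly over $k\le N$'' is an unproved assertion, not a lemma. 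Your off-diagonal bound also conflates two mechanisms: the crude $|\log(P/Q)|\gg x^{-k}$ estimate is both unnecessary (Montgomery--Vaughan already yields the off-diagonal directly, as in Lemma~\ref{MVMVT1}) and, taken alone, too weak to control the coefficient sums uniformly in $k$. If you want to salvage the direct approach, you should either carry out the set-partition analysis rigorously, or---more efficiently---follow the paper and route the calculation through the random Euler product, where all of this is automatic.
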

Before proving Lemma \ref{c.f. 2} we require several additional lemmas.  

\begin{lemma} \label{sound's lemma} Let $2 \leq x \leq T$. 
Also let $k$ be a natural number such that $x^k\leq T/\l T$.  Then for any complex numbers $a_p$ we have
\bes
\int_0^T \! \bigg| \sum_{p \leq x} a_p p^{-it} \bigg|^{2k} \, dt \ll T k!  \bigg(\sum_{p \leq x} |a_p|^2\bigg)^k.
\ees
\end{lemma}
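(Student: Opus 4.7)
The plan is to expand the $2k$-th power into a sum over products of primes and then apply the Montgomery--Vaughan mean value theorem, exploiting unique factorization to control the combinatorics.

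First I would write
\[
\left(\sum_{p\leq x} a_p p^{-it}\right)^{k} = \sum_n c_n\, n^{-it},
\]
where, for $n = p_1^{m_1}\cdots p_r^{m_r}$ with $m_1+\cdots+m_r = k$ and each $p_i\leq x$, we have $c_n = \frac{k!}{m_1!\cdots m_r!}\, a_{p_1}^{m_1}\cdots a_{p_r}^{m_r}$, and $c_n = 0$ otherwise. Unique factorization is what makes this identification clean, and every $n$ appearing in the sum satisfies $n \leq x^k$. Hence
\[
\int_0^T \left|\sum_{p\leq x} a_p p^{-it}\right|^{2k}dt \;=\; \int_0^T \left|\sum_{n\leq x^k} c_n\, n^{-it}\right|^{2}dt.
\]

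Next I would apply the Montgomery--Vaughan mean value theorem to the right-hand side, yielding
\[
\int_0^T\left|\sum_{n\leq x^k} c_n n^{-it}\right|^{2}dt \;=\; \sum_{n\leq x^k}|c_n|^2\bigl(T + O(n)\bigr) \;\ll\; (T + x^k)\sum_{n\leq x^k}|c_n|^2.
\]
The hypothesis $x^k\leq T/\log T$ (in fact $x^k \ll T$ suffices here) lets me absorb the $x^k$ into $T$, so the whole integral is $\ll T\sum_n |c_n|^2$.

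The key combinatorial step is then the bound $\sum_n |c_n|^2 \leq k!\bigl(\sum_p |a_p|^2\bigr)^k$. To see this I would use the multinomial inequality $\frac{k!}{m_1!\cdots m_r!}\leq k!$ on one of the two factors of $\left(\frac{k!}{m_1!\cdots m_r!}\right)^{2}$, giving
\[
\sum_{n}|c_n|^2 \;\leq\; k!\sum_{r\geq 1}\sum_{\substack{p_1<\cdots<p_r\leq x\\ m_1+\cdots+m_r = k}} \frac{k!}{m_1!\cdots m_r!}\,|a_{p_1}|^{2m_1}\cdots|a_{p_r}|^{2m_r} \;=\; k!\left(\sum_{p\leq x}|a_p|^2\right)^{k}
\]
by the multinomial theorem applied to $\bigl(\sum_p |a_p|^2\bigr)^k$. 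Combining this with the previous display yields the claimed bound $\ll T\, k!\bigl(\sum_p|a_p|^2\bigr)^k$.

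There is no real obstacle here: the only subtlety is checking that the off-diagonal contribution in Montgomery--Vaughan (the $O(n)$ term) is swallowed by $T$, which is precisely what the hypothesis $x^k \leq T/\log T$ guarantees, since $n \leq x^k$ throughout. The combinatorial bound on $\sum_n |c_n|^2$ is where the factorial $k!$ (as opposed to something worse like $k!^2$) enters, and this is the feature that makes the lemma useful when $k$ is large relative to $\sum_p |a_p|^2$.
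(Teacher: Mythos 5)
The paper offers no proof of this lemma; it simply cites Soundararajan~\cite{So}. Your argument---expanding the $k$-th power via the multinomial theorem, applying Montgomery--Vaughan (with $n\leq x^k\leq T$ absorbing the off-diagonal term), and then bounding $\sum_n|c_n|^2$ by $k!\big(\sum_p|a_p|^2\big)^k$ via $\frac{k!}{m_1!\cdots m_r!}\leq k!$ on one factor and the multinomial theorem on the other---is precisely Soundararajan's own proof (Lemma~3 of the cited paper) and is correct.
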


\begin{proof}
This is due to Soundararajan in ~\cite{So}.
\end{proof}

The next lemma is a generalization of Montgomery and Vaughan's 
mean value theorem for Dirichlet polynomials ~\cite{MVMVT}. 

\begin{lemma} \label {MVMVT1}   Let $T \in \mathbb{R}$ and $m, k \in \mathbb{N}$. For any complex numbers $a_n$, $b_n$
\bes 
\begin{split}
\int_0^{T} \bigg( \sum_n a_n n^{-it} \bigg)^m \overline{\bigg(\sum_n b_n n^{-it}  \bigg)}^k \, dt
=& T \sum_n A_n \overline{B_n}\\
&+O\bigg(\bigg(\sum_n n |A_n|^2\bigg)^{1/2}\bigg(\sum_n n |B_n|^2\bigg)^{1/2}\bigg),
\end{split}
\ees
where 
\[
 A_n=\sum_{n_1\cdots n_m=n} a_{n_1}\cdots a_{n_m} \qquad\hbox{and}\qquad
  B_n=\sum_{n_1\cdots n_k=n} b_{n_1}\cdots b_{n_k}.
 \]
\end{lemma}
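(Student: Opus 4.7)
The approach is to collapse the $m$-th and $k$-th powers into single Dirichlet series and then apply the bilinear form of Montgomery and Vaughan's mean value theorem. By direct multiplication of Dirichlet series (equivalently, the multinomial theorem), we have
\[
\bigg(\sum_\nu a_\nu \nu^{-it}\bigg)^m = \sum_n A_n n^{-it}, \qquad \bigg(\sum_\nu b_\nu \nu^{-it}\bigg)^k = \sum_n B_n n^{-it},
\]
with $A_n$ and $B_n$ as defined in the statement. Taking the complex conjugate of the second expression and substituting, the left-hand side of the claimed identity becomes
\[
\int_0^T \bigg(\sum_n A_n n^{-it}\bigg) \overline{\bigg(\sum_n B_n n^{-it}\bigg)} \, dt.
\]

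Next I would apply the bilinear version of Montgomery and Vaughan's theorem, namely
\[
\int_0^T \bigg(\sum_n \alpha_n n^{-it}\bigg)\overline{\bigg(\sum_n \beta_n n^{-it}\bigg)}\, dt = T\sum_n \alpha_n \overline{\beta_n} + O\bigg(\Big(\sum_n n|\alpha_n|^2\Big)^{1/2}\Big(\sum_n n|\beta_n|^2\Big)^{1/2}\bigg),
\]
with $\alpha_n = A_n$, $\beta_n = B_n$. This yields the lemma immediately.

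The only mildly non-routine point is justifying this bilinear mean value estimate from the diagonal version in \cite{MVMVT}. One may either cite the Hilbert-type inequality of Montgomery and Vaughan directly (it is bilinear already, with $\lambda_n = \log n$, and the mean value theorem follows by integrating $|\sum A_n n^{-it}|^2$ and comparing diagonal and off-diagonal terms via that inequality applied to $A_n \overline{B_n}$ pairs), or one may polarize: the identity
\[
4 \alpha \overline{\beta} = |\alpha+\beta|^2 - |\alpha-\beta|^2 + i|\alpha+i\beta|^2 - i|\alpha-i\beta|^2
\]
applied pointwise to the two Dirichlet polynomials reduces the bilinear integral to four instances of the diagonal mean value theorem, and the Cauchy-Schwarz inequality then consolidates the four error terms into the single bilinear error $\left(\sum n|A_n|^2\right)^{1/2}\left(\sum n|B_n|^2\right)^{1/2}$.

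Beyond this polarization step there is no real obstacle: once the multiplicative structure $(\sum a_\nu \nu^{-it})^m = \sum A_n n^{-it}$ is noted, the lemma is essentially a restatement of the classical mean value theorem. I therefore expect the write-up to be very short, with the sole delicate point being a clean reference or derivation for the bilinear estimate.
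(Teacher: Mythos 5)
The paper gives no proof of this lemma: it simply cites Tsang's thesis \cite{Ts} and \cite{Ts2}, so there is no written argument to compare against. Your outline is the standard and correct one: collapse $(\sum a_\nu \nu^{-it})^m$ into a single Dirichlet polynomial $\sum A_n n^{-it}$ via the multinomial expansion, do the same for the $b$-side, and then apply a bilinear Montgomery--Vaughan mean value estimate. That is surely what Tsang does as well.

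The one place you should tighten the write-up is the final consolidation step. Polarizing and applying the diagonal mean value theorem to $\sum(A_n \pm B_n)n^{-it}$ and $\sum(A_n \pm iB_n)n^{-it}$ produces error terms of the form $O\big(\sum_n n|A_n \pm B_n|^2\big) = O\big(\sum_n n|A_n|^2 + \sum_n n|B_n|^2\big)$; Cauchy--Schwarz alone does not turn that sum into the product $\big(\sum n|A_n|^2\big)^{1/2}\big(\sum n|B_n|^2\big)^{1/2}$, which can be much smaller. The standard fix is the homogeneity trick: replace $A_n$ by $\lambda A_n$ and $B_n$ by $\lambda^{-1}B_n$ (leaving the left-hand side and the main term unchanged), so the error becomes $O\big(\lambda^2 \sum n|A_n|^2 + \lambda^{-2}\sum n|B_n|^2\big)$, and then optimize over $\lambda > 0$. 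Alternatively, as you note, one can invoke the bilinear Hilbert-type inequality of Montgomery and Vaughan directly and avoid polarization altogether. With either of these substitutions in place of ``Cauchy--Schwarz consolidates,'' the argument is complete.
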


\begin{proof}
This is proved in K.M. Tsang's PhD thesis ~\cite{Ts}.
A proof may also be found in ~\cite{Ts2}.
\end{proof}
We now make an observation about the main term in Lemma \ref{MVMVT1}.  
Let $g(n)$ be a multiplicative function defined by $g(1)=1$ and 
$g(n)=\a_1 \theta_{p_1}+\cdots+\a_r \theta_{p_r}$, for $n=p_1^{\a_1}\cdots p_r^{\a_r}$ 
and indeterminates $\theta_{p_1},...,\theta_{p_r}$. 
By the unique factorization of the integers, and since for $m \in \mathbb Z$ we have 
\bes
\int_0^1 \! e(m\theta) \, d\theta=
\begin{cases} 
  1 & \mbox{if  } m = 0, \\
  0 & \mbox{if  } m \neq 0,
\end{cases}
\ees
we see that 
\be \label{clever observation}
\sum_n A_n \overline{B}_n= 
\int_0^1 \cdots \int_0^1 \! \bigg( \sum_n a_n e(g(n)) \bigg) 
\overline{\bigg( \sum_n b_n e(g(n)) \bigg)} \, \prod_{p}d\theta_p.
\ee

To state our next lemma write
\bes
S(\theta)=V^{-1/2}\sum_{p^n \leq x} \frac{\l p }{p^{n\s}}e(n \theta_p) 
\hspace{.3 in} \mbox{and} \hspace{.3 in} f(t)=V^{-1/2}\sum_{p^n \leq x} \frac{\l p}{p^{n(\s+it)}}.
\ees
 
\begin{lemma} \label{moments lem 11} Let  $1/2 \leq \s \leq 1 $, 
$m, k =0, 1, 2, \ldots$, and $e^{20} \leq x \leq T^{1/(5(m+k))}$. Then  
\be \label{moments 11}
\frac1T\int_0^{T} f(t)^m \overline {f(t)^k}\, dt= 
\int\limits_{\mathbb{T}^{\pi(x)}} \! S(\theta)^m  \overline{S(\theta)^k} \, d\theta  
+O\Big(V^{- (m+k)/2}T^{-1/3}(m!k!)^{1/2} \Big).
\ee
Moreover,
\bes
\frac1T \int_0^{T} \big| f(t) \big|^{2k} \,dt ,   \quad
\int\limits_{\mathbb{T}^{\pi(x)}} \! \big| S(\theta) \big|^{2k} \, d\theta 
\ll 18^k k! \ .
\ees
If $m=k=0$, then \eqref{moments 11}  holds without an error term.
\end{lemma}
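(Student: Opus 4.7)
The plan is to reduce the $t$-integral moments of $f$ to torus integrals of $S$ via the generalized Montgomery--Vaughan mean value theorem (Lemma \ref{MVMVT1}) combined with the observation \eqref{clever observation}. Setting $a_n=V^{-1/2}\Lambda(n) n^{-\s}$ if $n\leq x$ is a prime power and $a_n=0$ otherwise, one has $f(t)=\sum_n a_n n^{-it}$ and $f(t)^m=\sum_n A_n n^{-it}$, with $A_n$ the $m$-fold Dirichlet convolution of $a$, supported on $n\leq x^m$; similarly one reads off the coefficients $B_n$ of $f(t)^k$. Applying Lemma \ref{MVMVT1} yields
\[
\int_0^T f(t)^m \overline{f(t)^k}\,dt = T\sum_n A_n\overline{B_n} + O\!\left(\Big(\sum_n n|A_n|^2\Big)^{1/2}\Big(\sum_n n|B_n|^2\Big)^{1/2}\right),
\]
and because $g$ is completely additive one has $S(\theta)^m=\sum_n A_n e(g(n))$, so \eqref{clever observation} identifies $\sum_n A_n\overline{B_n}$ with $\int_{\mathbb{T}^{\pi(x)}} S(\theta)^m\overline{S(\theta)^k}\,d\theta$. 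This is the main term of \eqref{moments 11}.

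To control the error in Lemma \ref{MVMVT1}, I first establish the torus moment bound $\int_{\mathbb{T}^{\pi(x)}}|S(\theta)|^{2k}\,d\theta \ll 18^k k!$. Write $S=\sum_p X_p$ with $X_p(\theta_p) = V^{-1/2}\sum_{p^n\leq x}(\log p) p^{-n\s}e(n\theta_p)$; the $X_p$ are independent mean-zero complex variables on the torus whose total second moment equals $\sum_p \int_0^1|X_p(\theta_p)|^2\,d\theta_p = V^{-1}\sum_{p^n\leq x}(\log p)^2/p^{2n\s} = 2+o(1)$ by the definition of $V$ and the Prime Number Theorem (which the hypothesis $x\geq e^{20}$ makes applicable). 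Expanding $\int|S|^{2k}\,d\theta$ and grouping by the matching of frequencies at each prime reduces the estimate to a combinatorial pairing argument: the Gaussian pairings contribute $O(2^k k!)$, and prime-power deviations and non-pairing matchings add a bounded multiplicative factor per pair that can be absorbed into replacing $2$ by $18$. Given this bound, $\sum_n |A_n|^2 = \int|S|^{2m}\,d\theta \ll 18^m m!$ by Parseval on the torus, and since $A_n$ is supported on $n\leq x^m$, $\sum_n n|A_n|^2 \leq x^m\cdot 18^m m!$; inserting this into the Lemma \ref{MVMVT1} error and dividing by $T$ yields $(18x)^{(m+k)/2}(m!k!)^{1/2}/T$, which under the hypotheses $x\leq T^{1/(5(m+k))}$ and the crude bound $V\ll \log T$ becomes $\ll V^{-(m+k)/2} T^{-1/3}(m!k!)^{1/2}$, as required.

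The $L^{2k}$ bound for $f$ on $[0,T]$ then follows by taking $m=k$ in the main identity and combining with the torus bound, and the case $m=k=0$ is trivial since both sides equal $1$. The main obstacle I anticipate is the torus moment bound itself: while the $L^2$ calculation is immediate, controlling all higher moments uniformly up to $k\lesssim\log T$ requires a careful combinatorial expansion that accounts for every prime simultaneously, and in particular verifying that the contributions from prime-power cross terms and non-pairing matchings can be absorbed into a single constant per pair (rather than accumulating a $k$-dependent factor). Making the replacement of $\sum_{p^n\leq x}$ by the full sum defining $V$ precise is a secondary technical point requiring the hypothesis $x\geq e^{20}$.
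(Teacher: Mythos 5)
Your treatment of the main term agrees with the paper: both apply Lemma \ref{MVMVT1} together with \eqref{clever observation} to identify $\sum_n A_n\overline{B_n}$ with the torus integral. But both halves of your error analysis have genuine gaps.

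\textbf{The $\sum_n n|A_n|^2$ bound.} You pass from $\sum_n |A_n|^2=\int|S|^{2m}\,d\theta\ll 18^m m!$ to $\sum_n n|A_n|^2\leq x^m\cdot 18^m m!$ by the crude estimate $n\leq x^m$ on the support of $A_n$. With your normalization this yields (after dividing by $T$) an error of size
$(18x)^{(m+k)/2}(m!k!)^{1/2}/T$, and to land on the target $V^{-(m+k)/2}T^{-1/3}(m!k!)^{1/2}$ you need $(18xV)^{(m+k)/2}\ll T^{2/3}$. You invoke ``$V\ll\log T$'' to close this, but that is false in the relevant range: since $V\asymp (2\s-1)^{-2}$ for $\s$ near $1/2$, in the application $V\asymp(\log T)^2/\psi(T)^2$, which exceeds $\log T$ whenever $\psi(T)=o\big((\log T)^{1/2}\big)$, and can be arbitrarily large for $1/2<\s\leq 1$ in the lemma as stated. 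The constraint $e^{20}\leq x\leq T^{1/(5(m+k))}$ permits $m+k$ as large as $\log T/100$, and then $(18V)^{(m+k)/2}$ is not $O(T^{2/3})$. The paper avoids this by estimating $\sum_n n|A_n|^2$ directly and without any $V$-normalization: Cauchy--Schwarz on the inner sum separates a $\Lambda^2$-factor from a $(p_1^{n_1}\cdots p_m^{n_m})^{-2\s}$-factor, Mertens controls the latter by $(7\log\log x)^m m!$, the $n$-weight is absorbed into $\big(\sum_{n\leq x}n\Lambda^2(n)\big)^m\leq(2x^2\log x)^m$, and the product is $\ll m!(2x\log x)^{2m}\ll m!\,T^{2/3}$. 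This bound is uniform in $\s$ precisely because it exploits that $|A_n|^2$ concentrates at small $n$; the crude $n\leq x^m$ throws that structure away and the $V^m$ you then carry cannot be recovered.

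\textbf{The torus moment bound.} You propose to establish $\int_{\mathbb{T}^{\pi(x)}}|S|^{2k}\,d\theta\ll 18^k k!$ first, by a ``combinatorial pairing argument'' over the independent $\theta_p$'s, and flag this yourself as the main obstacle. This is indeed where the real work lies, and it is not carried out. The paper goes the other way around: it first bounds $\frac1T\int_0^T|f(t)|^{2k}\,dt\ll 18^k k!$ by splitting $\sum_{p^n\leq x}\Lambda(p^n)p^{-n(\s+it)}$ into the prime sum, the prime-square sum, and a bounded tail, applying Soundararajan's Lemma \ref{sound's lemma} to the first two (this is where the combinatorics you anticipate are packaged), and a trivial bound to the tail; then it deduces the torus bound from the already-established identity \eqref{moments 11} with $m=k$. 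You never invoke Soundararajan's lemma at all, and without it---or a completed combinatorial argument in its place---the moment bound is unproven. Since you also use that bound as the input to your $\sum_n|A_n|^2$ estimate, the entire error analysis rests on two unresolved steps.
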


\begin{proof} 
The last assertion of the lemma is obvious. 
We may therefore assume that at least one of $m, k \neq 0$.
Without loss of generality, we assume that $m\neq0$.
Applying Lemma \ref{MVMVT1} 
and \eqref{clever observation},  we find that
\be\label{moments 2} 
\begin{split}
V^{ (m+k)/2}\int_0^{T} f(t)^m \overline{f(t)^k}\, dt= &
V^{ (m+k)/2}T \int\limits_{\mathbb{T}^{\pi(x)}} \! S(\theta)^m  \overline{S(\theta)^k} \, d\theta \\
& +O\bigg(\bigg(\sum_n n |A_n|^2\bigg)^{1/2}\bigg(\sum_n n |B_n|^2\bigg)^{1/2}\bigg),
\end{split}
\ee 
where, for $m,k\neq0$,
\[
\ds A_n= \sum_{\substack{p_1^{n_1}\cdots p_m^{n_m} =n \\ p_j^{n_j} \leq x, j=1,2,...,m}} 
\frac{\l p_1 \cdots \l p_m}{(p_1^{n_1}\cdots p_m^{n_m})^{\s}} 
\quad\hbox{and}\quad 
 \ds B_n=\sum_{\substack{p_1^{n_1}\cdots p_{k}^{n_{k}} =n \\ p_j^{n_j} \leq x, j=1,2,...,k}} 
\frac{\l p_1 \cdots \l p_{k}}{(p_1^{n_1}\cdots p_{k}^{n_{k}})^{\s}}.
\] 
If $k=0$, then $B_1=1$ and $B_n=0$ for $n=1, 2, \ldots$.
By Cauchy's inequality,    
\bea \notag
\sum_{n \leq x^m} n |A_n|^2&=&\sum_{n \leq x^m} n 
\bigg(\sum_{\substack{p_1^{n_1}\cdots p_m^{n_m} =n \\ p_j^{n_j} \leq x, j=1,2,\ldots,m}} 
\frac{\l p_1 \cdots \l p_m}{(p_1^{n_1}\cdots p_m^{n_m})^{\s}}\bigg)^2 \\ \notag
& \leq& \sum_{n \leq x^m} n 
\bigg(\sum_{\substack{p_1^{n_1}\cdots p_m^{n_m} =n \\ p_j^{n_j} \leq x, j=1,2,\ldots,m}} \l^2 p_1 \cdots \l^2 p_m \bigg)
 \bigg(\sum_{\substack{p_1^{n_1}\cdots p_m^{n_m} =n \\ p_j^{n_j} \leq x, j=1,2,\ldots,m}} (p_1^{n_1}\cdots p_m^{n_m})^{-2\s}\bigg).\notag
\eea
Now, given $n, n_1, \ldots, n_m$, the equation $n=p_1^{n_1}\cdots p_m^{n_m}$ 
has at most $m^m$ solutions in $(p_1, \ldots , p_m)$.  Therefore,
\bes
\sum_{\substack{p_1^{n_1}\cdots p_m^{n_m} =n \\ p_j^{n_j} 
\leq x, j=1,2,\ldots,k}}(p_1^{n_1}\cdots p_m^{n_m})^{-2\s}\leq m^m
 \sum_{p_1,\ldots, p_m \leq x} \;\;\sum_{n_1, \ldots , n_m =1}^{\infty}  (p_1^{n_1}\cdots p_m^{n_m})^{-2\s}, 
\ees
which by  Mertens' theorem is
\bes
 \ll 6^m m! \bigg(\sum_{p \leq x} \frac{1}{p^{2\s}} \bigg)^m\ll (7 \l \l x)^m m!\ . 
\ees
Hence,
\bea \notag
\sum_{n \leq x^m} n |A_n|^2 &\ll& (7 \l \l x)^m m! \sum_{ n\leq x^m} 
\sum_{\substack{p_1^{n_1}\cdots p_k^{n_k} =n \\ p_j^{n_j} \leq x, j=1,2,...,m}} p_1^{n_1}\cdots p_m^{n_m} \l^2 p_1 \cdots \l^2 p_m\\ \notag
&=& (7 \l \l x)^m m! \bigg(\sum_{n\leq x} n \Lambda^2(n) \bigg)^m.
\eea
Since  \
$\sum_{n \leq x} n \L^2(n) \leq 2 x^2 \l x  $
\  and \ $7 \l \l x < \l x$ for $x>e^{20}$, we see that
 \bes
\sum_{n \leq x^m} n |A_n|^2 \ll m! ( 2x \l x)^{2m}.
\ees
Since $e^{20} \leq x \leq T^{1/(5(m+k))}$, we easily see that
$(2 x \l x )^{2m} < T^{2/3} $. Hence,
 \bes
\sum_{n \leq x^m} n |A_n|^2 \ll m! T^{2/3}.
\ees
Now, if $k=0$, \eqref{moments 11} follows since $\sum_{n \leq x^k} n |B_n|^2 =1$.  If $k \neq 0$, we 
similarly have 
\bes
\sum_{n \leq x^k} n |B_n|^2  \ll k! T^{2/3}.
\ees
Thus, the error term in \eqref{moments 2} is
\[
\ll  (m! k!)^{1/2} T^{2/3},
\]
and \eqref{moments 11} follows.

To prove the second assertion of the lemma we start with the observation that
\bes
\bigg|  \sum_{p^n \leq x} \frac{ \l p}{p^{n(\s+it)}} \bigg|^{2k} \leq
 9^k\bigg(\bigg| \sum_{p \leq x} \frac{ \l p}{p^{\s+it}} \bigg|^{2k} 
 +\bigg| \sum_{p^2 \leq x} \frac{ \l p}{p^{2(\s+it)}} \bigg|^{2k} +\frac{\z'}{\z}(3/2)^{2k}\bigg).
\ees
By Lemma \ref{sound's lemma}
 \bes
\frac1T \int_{0}^T \! \bigg|\sum_{p \leq x} \frac{ \l p}{p^{\s+it}}\bigg|^{2k} \! dt \ll  k!  (2V)^{k}.
\ees
Making the change of variable,  $u=2t$, we also see that
\bes
\frac1T \int_{0}^T \! \bigg|\sum_{p \leq x} \frac{ \l p}{p^{2\s+2it}}\bigg|^{2k} \! dt \ll  k!.  
\ees
It follows that
\[
\frac1T \int_0^{T} \big| f(t) \big|^{2k} \,dt \ll 18^k k!.
\]
To obtain the analogous bound for $\int\limits_{\mathbb{T}^{\pi(x)}} \! \big| S(\theta) \big|^{2k} \, d\theta$, 
we apply  \eqref{moments 11} 
and note that the error term is  $\ll 18^k k!$.
\end{proof}

Applying the preceding lemma, we can now prove 
\begin{lemma} \label{moments prop}
Let  $1/2+4/\l x \leq \s \leq 1$ and $ x \leq T^{1/(5N)}$ be sufficiently large with 
$N$ an even integer. Then
\be \label{prop}
\begin{split}
\frac1T \int_0^{T} \! e(\vec{u} \cdot f(t)) \, dt=  &
\int\limits_{\mathbb{T}^{\pi(x)}} \! e(\vec{u} \cdot  S(\theta)) \, d\theta
+O\bigg( \frac{(6 \sqrt{2} \pi(|u|+|v|))^N}{(N/2)!}\bigg)\\
&+O\bigg(T^{-1/3 }  \sum_{k=0}^{N-1} (6 \sqrt{2} \pi V^{-1/2}(u^2+v^2)^{1/2})^k\bigg).
\end{split}
\ee
\end{lemma}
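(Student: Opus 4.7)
The plan is to Taylor-expand both $e(\vec u\cdot f(t))$ and $e(\vec u\cdot S(\theta))$ to order $N-1$, interchange summation and integration, and then apply Lemma \ref{moments lem 11} to each of the resulting mixed moments. The tails beyond order $N$ are controlled by the $2k$-th moment estimate in the second assertion of that lemma.

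To express $(\vec u\cdot f(t))^{k}$ in terms of moments of $f$, I set $\alpha=u-iv$ so that $\vec u\cdot z=\tfrac12(\alpha z+\bar\alpha\bar z)$, and expand binomially:
\[
(\vec u\cdot f(t))^{k}=2^{-k}\sum_{j=0}^{k}\binom{k}{j}\alpha^{j}\bar\alpha^{k-j}f(t)^{j}\overline{f(t)}^{\,k-j},
\]
with the analogous identity for $S(\theta)$. For each $k\le N-1$ and each $0\le j\le k$, the hypothesis $x\le T^{1/(5N)}$ makes Lemma \ref{moments lem 11} applicable with $m=j$ and $k'=k-j$, giving an error of $O(V^{-k/2}T^{-1/3}(j!(k-j)!)^{1/2})$ in the discrepancy between the time and torus moments. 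Weighting these by the Taylor coefficients $(2\pi i)^{k}/(2^{k}k!)$ and summing, the inner sum over $j$ is handled by Cauchy--Schwarz in the form $\sum_{j=0}^{k}\binom{k}{j}^{1/2}\le\sqrt{k+1}\cdot 2^{k/2}$, which, after cancellation with the $k!$, bounds the $k$-th contribution by $T^{-1/3}\pi^{k}(u^{2}+v^{2})^{k/2}V^{-k/2}\sqrt{k+1}\cdot 2^{k/2}/(k!)^{1/2}$. This is trivially majorized by the $k$-th term of the second $O$ in \eqref{prop}, after summing over $0\le k\le N-1$.

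For the Taylor remainder I use $|\vec u\cdot f(t)|\le(u^{2}+v^{2})^{1/2}|f(t)|$ together with the second assertion of Lemma \ref{moments lem 11} (valid since $N$ is even) to estimate $T^{-1}\int_{0}^{T}|f(t)|^{N}\,dt\ll 18^{N/2}(N/2)!$; the torus integral of $|S(\theta)|^{N}$ is handled identically. The algebraic identities $(2\pi)^{N}18^{N/2}=(6\sqrt{2}\pi)^{N}$ and $((N/2)!)^{2}\le N!$ then convert the remainder into exactly the first $O$-term of \eqref{prop}. The main obstacle is purely bookkeeping: one has to recognise the constant $6\sqrt{2}\pi$ as the product of the $2\pi$ in $e(\cdot)$ with the $\sqrt{18}=3\sqrt{2}$ coming from the Dirichlet polynomial moment bound, and to check that subexponential nuisances such as the $\sqrt{k+1}$ in the Cauchy--Schwarz step, and the ratio $((N/2)!)^{2}/N!\le 1$ in the remainder step, are absorbed without disturbing the shape of either error term.
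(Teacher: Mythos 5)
Your proposal is correct and follows essentially the same route as the paper's proof: expand $e(\vec{u}\cdot f(t))$ and $e(\vec{u}\cdot S(\theta))$ to order $N-1$, decompose $(\vec{u}\cdot z)^k$ via the identity $u\Re z+v\Im z=\tfrac12(\alpha z+\bar\alpha\bar z)$ into mixed moments $f^{j}\overline{f}^{\,k-j}$, apply Lemma~\ref{moments lem 11} term by term, control the inner $j$-sum by Cauchy--Schwarz, and absorb the Taylor remainder via the $N$-th moment bound $18^{N/2}(N/2)!$ together with $((N/2)!)^2\le N!$. Both the decomposition and the use of Lemma~\ref{moments lem 11} coincide with the paper; your constants and bookkeeping check out.
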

\begin{proof}  
We begin by noting that for any complex number $z$,
\be \notag
u \Re z+v \Im z=\frac12(u-vi)z+\frac12(u+vi)\overline{z}.
\ee
 Define $C_1=\frac12(u-vi)$ and $C_2=\frac12(u+vi)$.  
 By expanding the exponential we see that the left-hand side of \eqref{prop} is equal to   
\be \label{star 1}
\begin{split}
\sum_{k=0}^{N-1}\frac{(2 \pi i)^k}{k!} 
\sum_{j=0}^k \binom{k}{j} C_1^j C_2^{k-j}\frac{1}{T}  \int_0^{T} \! f(t)^j  \overline{f(t)^{k-j} } \, dt
+ O\bigg(\frac{(2\pi)^N}{N!}(|u|+|v|)^N\frac{1}{T} \int_0^{T} \big| f(t) \big|^N \, dt  \bigg).
\end{split}
\ee
Similarly, expanding the main term on the right-hand side of the \eqref{prop}, we see that
\be \label{star 2}
\begin{split}
\sum_{k=0}^{N-1}\frac{(2 \pi i)^k}{k!} \sum_{j=0}^k \binom{k}{j} C_1^j C_2^{k-j} 
 \intl_{\mathbb{T}^{\pi(x)}} \! S(\theta)^j\overline{(S(\theta))}^{k-j} \, d\theta
+  O\bigg(\frac{(2\pi)^N}{N!}(|u|+|v|)^N 
\intl_{\mathbb{T}^{\pi(x)}}  \big|S(\theta) \big|^N \, d\theta  \bigg).
\end{split}
\ee
By Lemma \ref{moments lem 11}, 
\bes
\frac1T \int_0^{T} \big| f(t) \big|^N \,dt, 
\quad\intl_{\mathbb{T}^{\pi(x)}}  \big|S(\theta) \big|^N \, d\theta   \
\ll \  18^{N/2} (N/2)! \ .
\ees
Thus, the error terms in \eqref{star 1} and \eqref{star 2} are 
\bes
\ll \frac{(6 \sqrt{2} \pi(|u|+|v|))^N}{(N/2)!}\ .
\ees
Next we difference the main terms of \eqref{star 1} and \eqref{star 2} and
apply Lemma \ref{moments lem 11} to see that
 \bea \notag
&&\sum_{k=0}^{N-1}\frac{(2 \pi i)^k}{k!} 
\sum_{j=0}^k \binom{k}{j} C_1^j C_2^{k-j} \bigg(\frac{1}{T} \int_0^T\! f(t)^j\overline{(f(t))}^{k-j} \, dt-
\intl_{\mathbb{T}^{\pi(x)}} \! S(\theta)^j\overline{(S(\theta))}^{k-j} \, d\theta\bigg)\\ \notag
 && \ll  T^{-1/3 }  \sum_{k=0}^{N-1} \frac{(6 \sqrt{2} \pi V^{-1/2}(u^2+v^2)^{1/2})^k}{k!} 
 \sum_{j=0}^k \binom{k}{j} (j! (k-j)!)^{1/2}  \\ \notag
 && \ll T^{-1/3 }  \sum_{k=0}^{N-1} (6 \sqrt{2} \pi V^{-1/2}(u^2+v^2)^{1/2})^k.    
 \eea 
 The result now follows.
\end{proof}

Finally, we prove the following lemma, which, when combined with 
Lemma \ref{moments prop}, implies Lemma \ref{c.f. 2}.

\begin{lemma} \label{normal dist lem} Let $10 \leq x \leq T$, $(2\s-1) \l x \geq 1$
and $\ds |u|, |v| < V^{1/2}/100$. Then
\be \label{ch.f. S}
\int\limits_{\mathbb{T}^{\pi(x)}} \! e(\vec{u} \cdot S(\theta)) \,d\theta  \\
=e^{-2 \pi^2(u^2+v^2)} \big(1+ E_6(u,v)\big),
\ee
where
\bes
E_6 \ll   \frac{(|u|+|v|)^3}{V^{3/2}}   
  + (u^2+v^2)(x^{1-2\s}((2\s-1)\l x+1)) .
\ees 
\end{lemma}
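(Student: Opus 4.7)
The plan is to exploit the independence of the variables $\theta_p$ across distinct primes. Writing $X_p(\theta) := V^{-1/2}\sum_{n:\,p^n \leq x}(\log p)p^{-n\sigma} e(n\theta)$, the integrand factors as a product over primes and
\[
\intl_{\mathbb{T}^{\pi(x)}}\! e(\vec{u} \cdot S(\theta))\, d\theta = \prod_{p \leq x} I_p, \qquad I_p := \int_0^1 e(\vec{u} \cdot X_p(\theta))\, d\theta.
\]
First I would Taylor expand each $I_p$ to second order with a cubic remainder. Using $\vec{u} \cdot X_p = V^{-1/2}\sum_n (\log p)p^{-n\sigma}(u\cos 2\pi n\theta + v\sin 2\pi n\theta)$, the linear term integrates to zero, and orthogonality of the trigonometric functions on $[0,1]$ makes the quadratic term explicit:
\[
I_p = 1 - \frac{\pi^2(u^2+v^2)}{V}\sum_{n:\,p^n \leq x}\frac{\log^2 p}{p^{2n\sigma}} + R_p,
\]
where the crude bound $|X_p(\theta)| \leq V^{-1/2}\log p/(p^\sigma-1) \ll V^{-1/2}(\log p)p^{-\sigma}$ (valid since $\sigma \geq 1/2$) gives $R_p \ll (|u|+|v|)^3 V^{-3/2}(\log p)^3 p^{-3\sigma}$.

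Next I would take logarithms of the factors, which is legitimate because the hypothesis $|u|,|v| < V^{1/2}/100$ keeps each $I_p$ within a fixed small neighborhood of $1$. Expanding $\log(1+z) = z + O(|z|^2)$ and summing over primes, the leading quadratic contribution is
\[
-\frac{\pi^2(u^2+v^2)}{V}\sum_{p^n \leq x}\frac{\log^2 p}{p^{2n\sigma}} = -2\pi^2(u^2+v^2) + \frac{\pi^2(u^2+v^2)}{V}\sum_{n > x}\frac{\Lambda^2(n)}{n^{2\sigma}},
\]
upon invoking the definition $2V = \sum_{n\geq 2}\Lambda^2(n)/n^{2\sigma}$. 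A standard partial summation combined with the prime number theorem gives the tail estimate $\sum_{n > x}\Lambda^2(n)/n^{2\sigma} \ll x^{1-2\sigma}\bigl(\log x/(2\sigma-1) + (2\sigma-1)^{-2}\bigr)$; combined with the matching lower bound $V \gg (2\sigma-1)^{-2}$ in this range, this produces exactly the $(u^2+v^2)x^{1-2\sigma}((2\sigma-1)\log x + 1)$ contribution to $E_6$. The sum $\sum_p R_p$ is controlled by $\sum_p (\log p)^3 p^{-3\sigma}$, which is uniformly bounded for $\sigma \geq 1/2$, yielding the $(|u|+|v|)^3 V^{-3/2}$ contribution; the quadratic $O(|z|^2)$ terms from $\log(1+z)$ are absorbed into the same two main error terms.

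Exponentiating the resulting expression for $\log \prod_p I_p$ yields $\prod_p I_p = e^{-2\pi^2(u^2+v^2)}(1 + O(E_6))$, as desired. The main obstacle will be the error bookkeeping: one must track the truncation tail $\sum_{n>x}\Lambda^2(n)/n^{2\sigma}$ against $V$ to recover precisely the stated form of $E_6$, and then verify that neither the secondary $|z|^2$ error from $\log(1+z)$ nor the accumulated Taylor cubic remainders dominate it. The hypothesis $|u|,|v| < V^{1/2}/100$ plays a dual role throughout: it legitimizes the logarithm of each $I_p$, and it ensures that the main quadratic term bounds all remainders up to constant factors, so that the final exponentiation does not worsen the error order.
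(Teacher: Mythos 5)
Your proposal follows essentially the same route as the paper's own proof: factor the integral using independence of the $\theta_p$, Taylor-expand each local factor to quadratic order (linear term vanishes, quadratic term computed by orthogonality, cubic remainder bounded via $|X_p| \ll V^{-1/2}(\log p)p^{-\sigma}$), use the hypothesis $|u|,|v| < V^{1/2}/100$ to justify taking logarithms of each factor, absorb the $O(|z|^2)$ correction into the cubic error, evaluate $\sum_{p^n\leq x}\log^2 p/p^{2n\sigma}$ against $2V$ by the prime number theorem to get the $-2\pi^2(u^2+v^2)$ main term plus the $x^{1-2\sigma}$ tail, and then exponentiate. No genuine differences in method.
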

\begin{proof}
We first note that the $\theta_p$ are independent variables, so
\bes
\int\limits_{\mathbb{T}^{\pi(x)}} \! e(\vec{u} \cdot S(\theta)) \, d\theta
=\prod_{p \leq x} \int_0^1 \! e\bigg(-\vec{u} \cdot \frac{\l p}{V^{1/2}} 
\sum_{n\leq \l_p x}   e(n\theta_p) p^{-n\s} \bigg) \, d\theta_p,
\ees
where $\l_p x$ denotes the logarithm of $x$ with respect to base $p$.
Expanding the exponential in the integrand on the right-hand side of the 
equation above and integrating the first three terms, we find that
\bes  
\begin{split}
\int\limits_{\mathbb{T}^{\pi(x)}} \! e(\vec{u} \cdot S(\theta)) \, d\theta
=&\prod_{p \leq x} \int_0^1 \, \sum_{\ell =0}^{\infty} 
\frac{\big(\vec{u} \cdot 2\pi i \l p \sum_{n\leq \l_p x} 
  e(n\theta_p){p^{-n\s}}\big)^{\ell}}{\ell! V^{\ell/2}} \, d\theta_p 
\\ \notag
=&\prod_{p \leq x} \Bigg(1- \frac{\pi^2(u^2+v^2) }{V}
\sum_{n \leq \l_p x} \frac{\l^2 p}{ p^{2n \s} } \\ \notag
&\qquad \qquad \qquad \qquad +\int_0^1 \, \sum_{\ell =3}^{\infty} 
\frac{\big(\vec{u} \cdot 2\pi i \l p \sum_{n\leq \l_p x} 
  e(n\theta_p){p^{-n\s}}\big)^{\ell}}{\ell! V^{\ell/2}} \, d\theta_p \Bigg).
\end{split}
\ees

Now write the right-hand side of this equation as $ \prod_{p \leq x} (1-M_p+R_p)$. 
Since we are assuming that $ |u|, |v| < V^{1/2}/100$, 
we have that 
\be \label{2 M bound}
M_p=\frac{\pi^2(u^2+v^2) }{V}\sum_{n < \l_p x}\frac{\l^2 p}{ p^{2n \s}  }<
\frac{\pi^2 (u^2+v^2) \l^2 p}{V(p^{2\s}-1)} < \frac{2 \pi^2}{100^2} <1/3.
\ee
Here we have used the fact that $\l^2 x/(x^{2\s}-1) < 1$ for $x\geq 2$. For $ |u|, |v| < V^{1/2}/100$
we also have that
\be \label{2 R bound}
\begin{split}
\AB  \int_0^1 \, \sum_{\ell =3}^{\infty} &
\frac{\Big(\vec{u} \cdot 2\pi i \l p \sum_{n\leq \l_p x} 
 e(n\theta_p)  p^{-n\s}\Big)^{\ell}}{\ell! V^{\ell/2}}  \AB \\
& \qquad \qquad \qquad \qquad \qquad \leq 
  \sum_{\ell=3}^{\infty} \frac{\bigg(2\pi (|u|+|v|) \l p \sum_{n \leq \l_p x}
 p^{-n\s} \bigg)^{\ell}}{\ell ! V^{\ell/2}} \\
 &  \qquad \qquad \qquad \qquad \qquad <
  \sum_{\ell=3}^{\infty} \frac{\Big(12\pi/100  
  \Big)^{\ell}}{\ell ! } <1/3.
  \end{split}
  \ee
It follows that we may expand the logarithm of $1-M_p+R_p$ in powers
of $-M_p+R_p$. 

Now, from the estimates in \eqref{2 M bound} and \eqref{2 R bound}, it is not difficult to see that
\[
|R_p| \ll \frac{(|u|+|v|)^3\l^3 p}{V^{3/2}p^{3\s}} \qquad \hbox{and} \qquad 
 |M_p| \ll \frac{(u^2+v^2)\l^2 p}{Vp^{2\s}}.
 \]
Hence, 
\bea \notag
\prod_{p \leq x}(1-M_p+R_p)&=&\prod_{p \leq x} \exp(\l(1-M_p+R_p)) \\ \notag
&=&\exp\bigg(\sum_{p \leq x}(-M_p+R_p+O((M_p+R_p)^2))\bigg) \\ \notag
&=& \exp\bigg(\sum_{p \leq x} -M_p+O\bigg(\frac{(|u|+|v|)^3\l^3 p}{V^{3/2} p^{3\s}}\bigg)\bigg).  
\eea
Thus,
\be  \label{expansion}
\begin{split}
\int\limits_{\mathbb{T}^{\pi(x)}} \! e(\vec{u} \cdot S(\theta)) \, d\theta
=&\prod_{p \leq x}  
\exp\bigg(-\frac{\pi^2 (u^2+v^2)}{V} \sum_{n \leq \l_p x}\frac{\l^2 p}{ p^{2 n\s} } 
+ O\Big(\frac{(|u|+|v|)^3}{V^{3/2} }\frac{\l^3 p}{p^{3\s}}\Big)\bigg)\\
 =&\exp\bigg(-\frac{\pi^2 (u^2+v^2) }{V}\sum_{p^n \leq x} \frac{\l^2 p}{p^{2n\s}}  \bigg)
 \bigg(1+O\bigg( \frac{(|u|+|v|)^3}{V^{3/2}}\bigg)\bigg).
\end{split}
\ee 
Finally, a short calculation using the Prime Number Theorem reveals that
\be \notag
\begin{split}
V^{-1}\sum_{p^n \leq x} \frac{\l^2 p}{p^{2n\s}} = &2\cdot\frac{\sum_{m=2}^{\infty} \frac{\Lambda^2(m)}{m^{2\s}}+
O\Big(\sum_{m > x}^{\infty} \frac{\Lambda^2(m)}{m^{2\s}} \Big)}{\sum_{m=2}^{\infty} \frac{\Lambda^2(m)}{m^{2\s}}}\\
=&2+O\bigg((2\s-1)^2 \cdot \frac{x^{1-2\s}}{(2\s-1)^2}((2\s-1)\l x+1)\bigg).
\end{split}
\ee
Using this estimate on the last line of \eqref{expansion}, we obtain \eqref{ch.f. S}.
\end{proof}

\subsection{The Proof of Theorem \ref{theorem 2}}
By Lemma \ref{c.f. 1} and Lemma \ref{c.f. 2} it follows that, for $|u|, |v| < V^{1/2}/100$,
\bes
\frac1T \int_0^T \! e\bigg(-\vec{u} \cdot \frac{\z'}{\z}(\s+it) V^{-1/2} \bigg) \, dt= e^{-2 \pi^2(u^2+v^2)}\big(1+E_4(u,v)\big)+E_5(u,v)+ E_1
\ees
where 
\bes
\begin{split}
 & E_1 \ll (|u|+|v|)V^{-1/2}x^{(1/2-\s)/2} \l T+T^{-(\s-1/2)/3}\frac{\l T}{\l x},  \\
 & E_4(u,v)\ll \frac{(|u|+|v|)^3}{V^{3/2}} +(u^2+v^2)(x^{1-2\s}((2\s-1) \l x+1)), \\
 & E_5(u,v) \ll  \frac{(6 \sqrt{2} \pi(|u|+|v|))^N}{(N/2)!}+T^{-1/3}.
\end{split}
\ees
Now take $N=2 \left \lfloor \psi(T)/(800 \l \psi(T)) \right \rfloor$, 
where $\psi(T)$ is any function that tends to infinity with $T$ 
and is also $o(\l T)$. Also, let $x=T^{1/(5N)}$, 
$\s=1/2+\psi(T)/(2\l T)$, and 
$|u|, |v| < \min\(V^{1/2}, 1/5 N^{1/2}\)/100$. 
We   note that $\s> 1/2+4/\l x$ and $2V=1/(2\s-1)^2+O(1)$.  
With these choices we find that 
\[
 E_4(u,v)\ll \frac{(|u|+|v|)^3}{V^{3/2}} +\frac{(u^2+v^2)}{\psi(T)^{10}},
\] 
and that 
\[
E_1, E_5(u,v) \ll  \psi(T)^{-10}.
\]  
This gives Theorem \ref{theorem 2}.
\subsection{An Upper Bound for the Characteristic Function}
Before proving Theorem \ref{theorem 1.1}, we
 prove a lemma
that will be needed in the proof of Theorem \ref{theorem disk 1}.
This lemma enables us to get an upper bound on 
the ch.f. of $\z'/\z(\s+it)V^{-1/2}$ for $|u|$ or $|v|$
larger than $\Omega=e^{-10} \min\( V^{1/2}, (\psi(T)/\l \psi(T))^{1/2} \)$.
One may compare this to Theorem \ref{theorem 2}, where we obtained
an asymptotic formula for the ch.f., but only
for $|u|,|v|\leq \Omega$. 

\begin{lemma} \label{tail end}
Let  $\psi(T)=(2\s-1)\l T$ be any positive function that tends to infinity with $T$ 
and is also $o(\l T)$. 
There exists a positive constant $K$ such that if $T$ is sufficiently large, 
\[
\tOmega = \min\bigg(e^{-10} \Big(\frac{\psi(T)}{ \l \psi(T))}\Big)^{1/2},  Ke^{\s/(2\s-1)} \Big) \bigg),
\]
and $|u|,|v| \leq \tOmega$, then we have that
\bes 
\frac1T \int_0^T \, e\bigg(\vec{u} \cdot\frac{\z'}{\z}(\s+it)V^{-1/2}\bigg) \, dt
\ll e^{-c(u^2+v^2)}+ \psi(T)^{-10}.
\ees
Here $c$ is a positive absolute constant.
\end{lemma}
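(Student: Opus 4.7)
The plan is to adapt the proof of Theorem~\ref{theorem 2}, replacing the pointwise asymptotic formula of Lemma~\ref{normal dist lem} by an upper bound valid on the wider strip $|u|,|v|\le\tOmega$. Choose $N=2\lfloor\psi(T)/(800\l\psi(T))\rfloor$ and $x=T^{1/(5N)}$ exactly as in the proof of Theorem~\ref{theorem 2}. Then Lemma~\ref{c.f. 1} reduces the claim to bounding the characteristic function of the Dirichlet polynomial $f(t)$, with error $\ll\psi(T)^{-10}$, and Lemma~\ref{moments prop} further reduces it to bounding $\int_{\mathbb{T}^{\pi(x)}}e(\vec u\cdot S(\theta))\,d\theta$. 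Both error terms from Lemma~\ref{moments prop} are $\ll\psi(T)^{-10}$ thanks to the restriction $|u|,|v|\le e^{-10}(\psi(T)/\l\psi(T))^{1/2}$ built into $\tOmega$.

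The heart of the matter is therefore to prove
\[
\Bigl|\,\int_{\mathbb{T}^{\pi(x)}}e(\vec u\cdot S(\theta))\,d\theta\,\Bigr|\ \ll\ e^{-c(u^2+v^2)}
\]
for some absolute $c>0$. By independence of the coordinates $\theta_p$, the integral factors as $\prod_{p\le x}I_p$ with
\[
I_p=\int_0^1 e\Bigl(-\vec u\cdot V^{-1/2}\l p\sum_{n\le \l_p x}p^{-n\s}e(n\theta_p)\Bigr)d\theta_p.
\]
Set $A_p=(u^2+v^2)\l^2 p/(V p^{2\s})$. Writing $u\cos(2\pi n\theta)+v\sin(2\pi n\theta)=\sqrt{u^2+v^2}\cos(2\pi n\theta-\varphi)$ exhibits $I_p$ as a Bessel-type integral in which the $n=1$ term contributes an oscillation of argument $\asymp\sqrt{A_p}$, while the $n\ge 2$ terms are uniformly smaller (since $\sum_{n\ge 2}p^{-n\s}\ll p^{-\s}$). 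Using the familiar bounds $|J_0(w)|\le 1-cw^2$ for small $|w|$ and $|J_0(w)|\le 1-c$ for $|w|\ge 1$, one obtains the key estimate
\[
|I_p|\ \le\ \exp\bigl(-c\,\min(A_p,1)\bigr).
\]

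Multiplying over $p$ and taking logarithms reduces the claim to the inequality $\sum_{p\le x}\min(A_p,1)\gg u^2+v^2$. When $A_2\le 1$ this is immediate from the defining identity $V=\tfrac12\sum_{n\ge 2}\L^2(n)/n^{2\s}$; otherwise we split the sum at the threshold $P$ determined by $A_P=1$, and a short prime-number-theorem computation, together with the estimate $V\asymp(2\s-1)^{-2}$, shows that the $p>P$ piece alone already contributes $\gg u^2+v^2$. The main obstacle is to guarantee that $P\le x$, since otherwise the splitting is meaningless; solving $A_P=1$ with $|u|,|v|\asymp\tOmega$ yields $\l P\asymp \s/(2\s-1)$, so the constraint $|u|,|v|\le Ke^{\s/(2\s-1)}$ appearing in $\tOmega$ is precisely what keeps $P$ comfortably inside $[2,x]$. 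Collecting everything then yields the stated bound.
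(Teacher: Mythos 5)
Your proposal follows the same skeleton as the paper through the reduction steps: Lemma~\ref{c.f. 1} and Lemma~\ref{moments prop} cut the problem down to bounding $\int_{\mathbb{T}^{\pi(x)}}e(\vec u\cdot S(\theta))\,d\theta$, which factors over primes by independence of the $\theta_p$. That much is correct and is exactly what the paper does. The two places where you diverge from the paper are the per-prime bound and the role of the cutoff in $\tOmega$, and both need repair.

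First, the claimed estimate $|I_p|\le\exp(-c\min(A_p,1))$ is not justified in the regime $A_p\ge1$, and the heuristic offered for it is false for small primes. You say the $n\ge2$ harmonics are ``uniformly smaller'' because $\sum_{n\ge2}p^{-n\s}\ll p^{-\s}$, but the implied constant is not small: for $p=2$ and $\s$ near $1/2$, $\sum_{n\ge2}2^{-n\s}=2^{-2\s}/(1-2^{-\s})\approx1.7$, which exceeds $2^{-\s}\approx0.7$. So the phase inside $I_2$ (and $I_3$, $I_5$, $I_7$) is \emph{not} close to a single cosine, and the Bessel bound $|J_0(w)|\le1-c$ for $|w|\ge1$ does not transfer to $I_p$ without a genuinely new argument. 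The paper avoids this issue entirely: it never estimates $|I_p|$ when $A_p$ is of order one or larger, but instead discards all primes $p<D$ using the trivial $|I_p|\le1$, chooses $D$ precisely so that $M_p,|R_p|<1/3$ for $p\ge D$, and then uses the logarithmic (equivalently, your small-$w$ Bessel) expansion only in the range where it is valid. Your argument is salvageable by adopting the same device; as written, the large-$A_p$ step is a gap.

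Second, the explanation of why $\tOmega$ is capped by $Ke^{\s/(2\s-1)}$ is off target. With the paper's choices $N=2\lfloor\psi(T)/(800\l\psi(T))\rfloor$ and $x=T^{1/(5N)}$ one has $\l x\asymp\l\psi(T)/(2\s-1)$, while your threshold satisfies $\l P\asymp1/(2\s-1)$; hence $P\le x$ is automatic once $\psi(T)$ is large, and the constraint on $\tOmega$ has nothing to do with keeping $P$ in $[2,x]$. The real issue is the size of the surviving tail. By the Prime Number Theorem, $\sum_{P<p\le x}\l^2p/p^{2\s}\asymp(2\s-1)^{-2}\bigl(P^{1-2\s}((2\s-1)\l P+1)-o(1)\bigr)$, so to conclude $\sum_{p>P}A_p\gg u^2+v^2$ with an \emph{absolute} implied constant one needs $P^{1-2\s}\gg1$, i.e.\ $P\le e^{C/(2\s-1)}$ for an absolute $C$. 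Since $P$ grows with $|u|+|v|$, this is exactly what forces $|u|,|v|\ll e^{\s/(2\s-1)}$. The paper makes this explicit by showing that its cutoff $D$ satisfies $D\le e^{1/(2\s-1)}$, whence $D^{1-2\s}\ge e^{-1}$ and $\sum_{D\le p\le x}\l^2p/p^{2\s}\gg V$. You should correct the stated motivation accordingly, as the current wording obscures the actual quantitative mechanism.
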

\begin{proof}
As in the proof of Theorem \ref{theorem 2} we let 
$N=2 \left \lfloor \psi(T)/(800 \l \psi(T) )\right \rfloor$. Also, let $x=T^{1/(5N)}$ and 
$\s=1/2+\psi(T)/(2 \l T)$. It then follows that    $\s>1/2+4/\l x$.
With these choices we find from Lemma \ref{c.f. 1} that
\[
\frac1T \int_0^T \, e\bigg(-\vec{u} \cdot\frac{\z'}{\z}(\s+it)V^{-1/2}\bigg) \, dt
=\frac1T \int_0^T \, e\bigg(\vec u \cdot \sum_{n \leq x} \frac{\L(n)}{n^{\s+it}}V^{-1/2}\bigg) \, dt+\psi(T)^{-10}.
\]
Applying Lemma \ref{moments prop}, we have that for $|u|,|v| \leq \tOmega$, the right-hand side
of this equation equals
\[
\intl_{\mathbb{T}^{\pi(x)}} e(\vec u \cdot S(\theta)) \, d\theta+\psi(T)^{-10}.
\]
Hence, to prove the lemma it suffices to show that
\bes
\intl_{\mathbb{T}^{\pi(x)}} e(\vec u \cdot S(\theta)) \, d\theta \ll e^{-c(u^2+v^2)},
\ees
for $|u|,|v| \leq \tOmega$. 

If $|u|$ and $|v|$ are less than $V^{1/2}/100$
we are done, by Lemma \ref{normal dist lem}. Thus, we may assume that 
 $|u|$ or $|v|$ is
greater than or equal to $V^{1/2}/100$.
First observe that for any $D \geq 2$,
\be \label{initial obs}
\AB \intl_{\mathbb{T}^{\pi(x)}} e\(\vec u \cdot S(\theta) \) \, d\theta \AB 
\leq \AB \prod_{D \leq p \leq x} \int_0^1 e\(\vec u \cdot S(\theta_p) \) \, d\theta_p \AB.
\ee
Now take
\[
D=C^{1/\s} \( \frac{|u|+|v|}{\sqrt{ V}}\)^{1/\s}\(\l \(C \frac{|u|+|v|}{\sqrt V}\)\)^{1/\s},
\]
where $C \geq 200$ is an absolute constant to be chosen later.  We   
expand the exponential on the right-hand side of \eqref{initial obs} and 
integrate term-by-term, as in the
proof of Lemma \ref{normal dist lem}. We thus find that
\be
\begin{split}
\int_0^1 e\(\vec u \cdot S(\theta_p) \) \, d\theta_p=& 1-\frac{\pi^2(u^2+v^2)}{V} 
\sum_{n \leq \l_p x}\frac{\l^2 p}{ p^{2n \s} } \\
&+\int_0^1 \, \sum_{\ell =3}^{\infty} 
\frac{\big(\vec{u} \cdot 2\pi i \l p \sum_{n\leq \l_p x} 
  e(n\theta_p) p^{-n\s}\big)^{\ell}}{\ell! V^{\ell/2}} ,
\end{split}
\ee
where $\l_p x$ means the logarithm of $x$
with respect to base $p$. Next notice  that 
$\l x/x^{\s}$ is decreasing for $x\geq 10$. Thus, for $p\geq D$ we have that
$\l p/p^{\s} \leq \l D/D^{\s}$. That is,
\bes 
\frac{\l p}{p^{\s}} \leq 
\frac{
\sqrt{V} 
\l\(C \frac{ (|u|+|v|)}{\sqrt V}
\l\(C\frac{(|u|+|v|)}{\sqrt V}\)\)}
{C \s  (|u|+|v|) \l \( C\frac{ (|u|+|v|)}{\sqrt V} \)  }. 
\ees
From this we  easily see that
\be \label{D bd}
(|u|+|v|) \frac{\l p}{\sqrt{V} p^{\s}} \leq \frac{2}{C} \cdot
\frac{\l \( C\frac{ (|u|+|v|)}{\sqrt V} \)+\l \l \( C\frac{ (|u|+|v|)}{\sqrt V} \)}{\l \( C\frac{ (|u|+|v|)}{\sqrt V} \)}
\leq \frac{4}{C}.
\ee
Consequently,
\bes \label{Mp bd}
\begin{split}
\frac{\pi^2(u^2+v^2) }{V}\sum_{n \leq \l_p x}\frac{\l^2 p}{ p^{2n \s}  }\leq &
\frac{\pi^2 (|u|+|v|)^2 \l^2 p}{V(p^{2\s}-1)}\\
\leq & \frac{32 \pi^2}{C^2}.
\end{split}
\ees
Next note that
\bes
\begin{split}
\Bigg| \int_0^1 \, \sum_{\ell =3}^{\infty} 
\frac{\Big(\vec{u} \cdot 2\pi i \l p \sum_{n\leq \l_p x} 
 e(n\theta_p)p^{-n\s}\Big)^{\ell}}{\ell! V^{\ell/2}} \Bigg| 
 &  \leq   \sum_{\ell=3}^{\infty} \frac{\bigg(2\pi (|u|+|v|) \l p \sum_{n \leq \l_p x}
 p^{-n\s} \bigg)^{\ell}}{\ell ! V^{\ell/2}} \\
 &  \leq  \sum_{\ell=3}^{\infty} \frac{\Big(8\pi (|u|+|v|)  \l p 
  \Big)^{\ell}}{\ell ! (\sqrt{V}p^{\s})^{\ell}}.
  \end{split}
  \ees
  By \eqref{D bd} this is
\bes
  \begin{split}
 \leq  \sum_{\ell=3}^{\infty} \frac{(32\pi /C 
  )^{\ell}}{\ell !}
  \leq \frac{e^{32\pi/C}(32\pi)^3}{C^3}.
\end{split} 
\ees
Hence, if $p\geq D$ we may expand the logarithm of 
\bes
\begin{split}
1-\frac{\pi^2(u^2+v^2)}{V} \sum_{n \leq \l_p x}\frac{\l^2 p}{ p^{2n \s}  } 
+\int_0^1 \, \sum_{\ell =3}^{\infty} 
\frac{\Big(\vec{u} \cdot 2\pi i \l p \sum_{n\leq \l_p x} 
e(n\theta_p)p^{-n\s}\Big)^{\ell}}{\ell! V^{\ell/2}} d\theta
\end{split}
\ees
whenever $C$ is sufficiently large. Write this 
as $1-M_p+R_p$. Thus, we obtain that
\be \label{taylor exp}
\prod_{D \leq p \leq x} \int_0^1 e\(\vec u \cdot S(\theta) \) \, d\theta
= \exp\bigg(\sum_{D \leq p \leq x}\big(-M_p+R_p+O(M_p^2+R_p^2)\big) \bigg).
\ee
  
   Since $M_p \ll (|u|+|v|)^2\l^2 p/(Vp^{2\s})$ and $R_p \ll (|u|+|v|)^3\l^3 p/(V^{3/2}p^{3\s})$,
   by \eqref{D bd} we have
  \bes 
  \begin{split}
  R_p+M_p^2+R_p^2 \ll& \frac{(|u|+|v|)^3\l^3 p}{V^{3/2}p^{3\s}}
  +\frac{(|u|+|v|)^4\l^4 p}{V^{2}p^{4\s}}+\frac{(|u|+|v|)^6\l^6 p}{V^{3}p^{6\s}}\\
  \ll& \frac{(|u|+|v|)^2 \l^2 p}{CVp^{2\s}} \ll \frac{(u^2+v^2) \l^2 p}{CVp^{2\s}}.
  \end{split}
  \ees
So, in particular, for $C$ large enough it follows that in \eqref{taylor exp} 
the term $R_p+O(M_p^2+R_p^2)$ is $\leq
(\pi^2/2)\cdot((u^2+v^2) \l^2 p/(Vp^{2\s}))$. Thus,
  \be \label{last bd 2}
  \begin{split}
  \prod_{D \leq p \leq x} \int_0^1 e\(\vec u \cdot S(\theta) \) & \, d\theta  
  \ll \exp\bigg(-\sum_{D \leq p \leq x}M_p+\frac{\pi^2}{2}\frac{(u^2+v^2) }{V} 
\sum_{D \leq p \leq x}\frac{ \l^2 p}{p^{2\s}} \bigg).
  \end{split}
  \ee
We now choose  $C$ to be large enough so that this holds, $|R_p| <1/3$, and $|M_p|<1/3$.

Next observe that 
  $M_p \geq \pi^2 (u^2+v^2)\l^2 p/(Vp^{2\s})$ so that
  \bes 
  \sum_{D \leq p \leq x} M_p  \geq  \frac{\pi^2 (u^2+v^2)}{V} \sum_{D \leq p \leq x} \frac{\l^2 p}{p^{2\s}}.
  \ees
 Applying this in \eqref{last bd 2}, we have
\be \label{collect ests}
\begin{split}
\prod_{D \leq p \leq x} \int_0^1 e\(\vec u \cdot S(\theta) \) \, d\theta \ll & 
\exp\bigg(- \frac{\pi^2}{2}(u^2+v^2)V^{-1}\sum_{D \leq p \leq x} \frac{\l^2 p}{ p^{2 \s}} \bigg).
\end{split}
\ee
By the Prime Number Theorem, there is an absolute constant $c_1>0$ such that
\be \label{final sum}
\begin{split}
\sum_{D \leq p \leq x} \frac{\l^2 p}{ p^{2 \s}}=&
\frac{1}{(2\s-1)^2}
 \Big(D^{1-2\s}((2\s-1)\l D +1)+O\Big((2\s-1) e^{-c_1\sqrt{\l D}}\Big)
\\
& \qquad \qquad \qquad -x^{1-2\s}((2\s-1)\l x  +1)
+O\Big((2\s-1)e^{-c_1\sqrt{\l x}}\Big)\Big).
\end{split}
\ee
This follows from the Prime Number Theorem with error term.

 By our choice of $x$ we note that
$x^{1-2\s}((2\s-1)\l x +1)=o(1)$. Also, both error terms are $o(1)$. 
Next observe that there is a positive absolute constant $K$ such that
whenever $|u|,|v|\leq \tOmega$, we have
\bes
\begin{split}
D\leq  & \(\frac{2KC}{V^{1/2}}\)^{1/\s} e^{1/(2\s-1)} \(\l\( \frac{2KC}{V^{1/2}}e^{\s/(2\s-1)}\) \)^{1/\s} \\
\leq & \(\frac{2KC}{V^{1/2}}\)^{1/\s} e^{1/(2\s-1)}\(\l e^{\s/(2\s-1)} \)^{1/\s} \\
\leq &  \(\frac{2KC\s}{(2\s-1)V^{1/2}}\)^{1/\s} e^{1/(2\s-1)} 
\leq e^{1/(2\s-1)},\\
\end{split}
\ees
where the last estimate follows from the Prime Number Theorem.
Hence, $D^{1-2\s} \geq e^{-1}$ and
the right-hand side of \eqref{final sum} is $\gg 1/(2\s-1)^2 \gg V$.  Combining this with
\eqref{initial obs} and \eqref{collect ests} completes the proof.
\end{proof}
\section{The Rate of Convergence to the
Normal Distribution}
\subsection{Beurling-Selberg Functions}
The next two lemmas
state properties of Beurling-Selberg functions.
These functions allow us to obtain bounds on the
rate of convergence
of the distribution of $\z'/\z(\s+it)V^{-1/2}$
to the normal distribution.
\begin{lemma} \label{magic functions 2}
Let $\d$ be a positive real number, let $a,b \in \R$, and let
$z=x+iy$. There exists an entire function
$F(z)$ with the following properties:
\bi
 \item[i)] $0\leq \(F(x)- \mathbf{1}_{[a, b]}(x)\) \ll \displaystyle \frac{\sin^2(\pi \d (x-a))}{(\pi \d (x-a))^2}
 +\frac{\sin^2(\pi \d (x-b))}{(\pi \d (x-b))^2}$;
 \item[ii)] $\ds \int_{-\infty}^{\infty} (F(x)-\mathbf{1}_{[a,b]}(x)) \, dx \ll 1/\d $;
 \item[iii)] $\widehat F(\xi)=0$, for $\xi \in \R$ with $|\xi| \geq \d$;
 \item[iv)] $\widehat F(\xi) \ll |b-a|+1/\d$, for $\xi \in \R$.
\ei
\end{lemma}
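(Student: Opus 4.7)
The plan is to construct $F$ as the classical Selberg majorant of $\mathbf{1}_{[a,b]}$ built from the Beurling extremal function. I would first invoke Beurling's entire function $B(z)$ of exponential type $2\pi$, which satisfies $B(x) \geq \sgn(x)$ for $x \in \mathbb R$, obeys the pointwise bound $0 \leq B(x) - \sgn(x) \ll \sin^2(\pi x)/(\pi x)^2$, and has the normalization $\int_{-\infty}^\infty (B(x) - \sgn(x))\,dx = 1$. Then I would set
\[
F(z) = \tfrac12 B(\delta(b-z)) + \tfrac12 B(\delta(z-a)),
\]
which is entire and of exponential type at most $2\pi\delta$.

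Property (i) rests on the pointwise identity $\sgn(b-x) + \sgn(x-a) = 2\mathbf{1}_{[a,b]}(x)$, verified by casework on $x < a$, $a \leq x \leq b$, and $x > b$. This yields
\[
F(x) - \mathbf{1}_{[a,b]}(x) = \tfrac12\bigl(B(\delta(b-x)) - \sgn(b-x)\bigr) + \tfrac12\bigl(B(\delta(x-a)) - \sgn(x-a)\bigr),
\]
in which each summand is non-negative and, after applying Beurling's pointwise bound to the arguments $\delta(b-x)$ and $\delta(x-a)$, is controlled by the stated $\sin^2/(\cdot)^2$ expressions. Integrating the same identity and performing the substitutions $y = \delta(b-x)$ and $y = \delta(x-a)$ reduces the integral to two copies of Beurling's normalization and produces property (ii) with constant $1/\delta$.

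For (iii), I note that (i) and (ii) together with $\mathbf{1}_{[a,b]} \in L^1(\mathbb R)$ force $F \in L^1(\mathbb R)$; since $F$ is entire of exponential type $2\pi\delta$, the Paley-Wiener theorem implies that $\widehat F$ vanishes outside $[-\delta, \delta]$. Finally, (iv) follows by splitting $F = \mathbf{1}_{[a,b]} + (F - \mathbf{1}_{[a,b]})$: the trivial estimate gives $|\widehat{\mathbf{1}_{[a,b]}}(\xi)| \leq b - a$, while the non-negativity of $F - \mathbf{1}_{[a,b]}$ combined with (ii) gives $|\widehat{F - \mathbf{1}_{[a,b]}}(\xi)| \leq \int (F - \mathbf{1}_{[a,b]})(x)\,dx \ll 1/\delta$.

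The substantive content sits entirely in the existence of the Beurling function with its three prescribed features; once that is available, all four properties reduce to routine bookkeeping, and there is no genuine obstacle. The only step where one must be slightly vigilant is the integrability check required to invoke Paley-Wiener in (iii), but this is immediate from the preceding properties.
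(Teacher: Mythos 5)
Your construction $F(z) = \tfrac12 B(\delta(b-z)) + \tfrac12 B(\delta(z-a))$ is exactly the Selberg majorant that underlies Lemma 5 of Vaaler's survey, which the paper cites for property i); the paper's proof is a proof-by-citation (Vaaler for i), integration of i) for ii), Montgomery's argument for iii), and the $L^1$ bound for iv)), and your argument simply unpacks those citations into an explicit construction, following the same route throughout.
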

\begin{proof}
Property $i)$ follows from
Lemma 5 of ~\cite{Va}, and property $ii)$ follows from property $i)$.
Property $iii)$ follows from ~\cite{Montg} 
(see the argument directly after the proof of Lemma 5). 
To obtain $iv)$, note that
that by $i)$  the $L^1$ norm of $F(x)$ is $\ll |b-a|+1/\d$. 
\end{proof}
\begin{lemma} \label{magic functions}
Let $r$ and $\d$ be positive real numbers
with $r \d \geq 1$. Also
let $\mathbf{z} \in \C^2$ and
$\vec x=(x_1, x_2) \in \R^2$.
Then there exist entire functions
$F_+(\mathbf{z})$ and $F_-(\mathbf{z})$,
 with the following properties:
\begin{itemize}
	\item[i)] $F_-(\vec{x}) \leq \mathbf{1}_{[0, r]}(|\vec{x}|) \leq F_+(\vec{x})$;
	\item[ii)] $\intl_{\R^2} \(F_{+}(\vec{x}))-F_{-}(\vec{x})) \) \, d \vec x \ll r/\delta$;
	\item[iii)] $\widehat F_{\pm}(\vec \xi)=0$, for $\vec \xi \in \R^2$ with $|\vec \xi|\geq \delta$;
	\item[iv)] $\widehat F_{\pm}(\vec \xi)\ll r^2$, for $\vec \xi \in \R^2$,
\end{itemize}
where $|\vec x|=\sqrt{x_1^2+x_2^2}$, 
and $d \vec x=dx_1 dx_2$.
\end{lemma}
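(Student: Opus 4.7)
The plan is to invoke the Holt--Vaaler construction of entire Beurling--Selberg majorants and minorants for Euclidean balls, which was cited just before the statement of the lemma. More precisely, for the disk $B_r=\{\vec{x}\in \R^2:|\vec{x}|\leq r\}$ and a parameter $\delta>0$, their result produces entire functions $F_\pm:\C^2\to\C$ satisfying $F_-(\vec{x})\leq \mathbf{1}_{B_r}(\vec{x})\leq F_+(\vec{x})$ on $\R^2$, with Fourier transforms supported in the ball $\{\vec{\xi}\in\R^2:|\vec{\xi}|\leq \delta\}$, and with $L^1$ difference controlled by the $(n-1)$-dimensional surface measure of $\partial B_r$ divided by $\delta$, which in our two-dimensional setting is $\ll r/\delta$. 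I would therefore first quote the Holt--Vaaler theorem with this choice of $r$ and $\delta$; properties i), ii), and iii) of the lemma are then immediate from the corresponding statements in \cite{Ho}.

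The remaining task is to verify property iv). Here I would argue by bounding $\|F_\pm\|_1$ and applying the trivial estimate $|\widehat{F}_\pm(\vec{\xi})|\leq \|F_\pm\|_1$. From i) we have the pointwise inequalities $0\leq F_+-\mathbf{1}_{B_r}\leq F_+-F_-$ and $0\leq \mathbf{1}_{B_r}-F_-\leq F_+-F_-$, so
\[
\|F_+-\mathbf{1}_{B_r}\|_1,\ \|\mathbf{1}_{B_r}-F_-\|_1\ \leq\ \|F_+-F_-\|_1\ \ll\ r/\delta.
\]
Combined with $\|\mathbf{1}_{B_r}\|_1=\pi r^2$, this yields
\[
\|F_\pm\|_1\ \leq\ \pi r^2 + O(r/\delta)\ \ll\ r^2,
\]
where the last step uses the hypothesis $r\delta\geq 1$, which makes $r/\delta\leq r^2$. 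Hence $|\widehat{F}_\pm(\vec{\xi})|\leq \|F_\pm\|_1\ll r^2$ for all $\vec{\xi}\in \R^2$, establishing iv).

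The main obstacle is essentially hidden in the reference: the substance of the lemma lies in the existence of the Holt--Vaaler functions with the correct surface-area scaling of the $L^1$ difference, which is a nontrivial multidimensional generalization of the one-dimensional Beurling--Selberg construction used in Lemma \ref{magic functions 2}. Once this is invoked as a black box, the only verification required beyond the statements in \cite{Ho} is the short sandwich argument above to control $\|F_\pm\|_1$ under the assumption $r\delta\geq 1$; this is where the coupling between the radius $r$ and the Fourier support parameter $\delta$ is genuinely used.
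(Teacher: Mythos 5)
Your proof follows the same route as the paper: invoke Holt--Vaaler for properties i)--iii) and then bound $\|F_\pm\|_1$ using ii) together with $r\delta\geq 1$ to get iv). The one small gloss is that Theorem~3 of Holt--Vaaler states that $F_\pm$ have exponential type at most $2\pi\delta$, and the paper passes from exponential type to compact Fourier support via the Paley--Wiener theorem (which also needs the $L^1$ bound you established), rather than reading iii) off directly from the cited theorem.
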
 
\begin{proof}
Properties $i)$ and $ii)$ follow from Theorem 3 of ~\cite{Ho}.  
By the same 
theorem, $F_{+}(z)$  and $F_{-}(z)$ are of
exponential type at most $2\pi \d$ (for the definition of exponential type see
 ~\cite{Ho}). Thus
$iii)$ follows by the Paley-Wiener Theorem  (see
Chapter III, Theorem 4.9 of ~\cite{St}).  
Finally, to obtain $iv)$, note that by $i)$ and $ii)$
the $L^1$ norm of $F_{\pm}(\vec x)$ is $ \ll r^2+r/\d
\ll r^2$, since $r \delta \geq 1$.
\end{proof}
\subsection{The Proof of Theorem \ref{theorem 1.1}}
Let $\psi(T)$, $\bOmega$ and $R$ be as in the statement of  Theorem \ref{theorem 1.1}.
Also, let $\vec u=(u,v)\in \R^2$, $d\vec u=dudv$, $\tOmega$ be as in Lemma \ref{tail end},
and $\Omega$ be as in Theorem \ref{theorem 2} .  
We take $F$ to be the analytic function from Lemma \ref{magic functions 2}
that approximates $\mathbf 1_{[a, b]}(x)$ along the real axis,
and $G$ to be the one that approximates $\mathbf 1_{[c, d]}(x)$.
We set $\delta=\tOmega$ in both functions. Also, let
\[
H(x)=\frac{\sin^2 (\pi \tOmega x)}{ (\pi \tOmega x)^2}.
\]

By property $i)$ of Lemma \ref{magic functions 2} we have
\be \label{indicator}
\begin{split}
F(x) G(y)=  \mathbf{1}_{R}(x,y) 
+O\Big( H(x-a)+H(x-b)+H(y-c)+H(y-d) \Big).
\end{split}
\ee
Now, note that
\be\label{thm 1 star}
\begin{split}
\frac1T \meas\bigg\{t \in[0, T] : \frac{\z'}{\z}(\s+it)  & V^{-1/2} \in R \bigg\} 
 =\frac1T \int_0^T \mathbf{1}_{R}\bigg(\frac{\z'}{\z}(\s+it) V^{-1/2}\bigg) \, dt.
\end{split}
\ee
By \eqref{indicator} the right-hand side of \eqref{thm 1 star} equals
\be \label{meas}
\frac1T \int_0^T F\bigg(\Re \frac{\z'}{\z}(\s+it) V^{-1/2}\bigg) G \bigg(\Im \frac{\z'}{\z}(\s+it) V^{-1/2}\bigg) \, dt
\ee
plus an error that is 
\bes
\begin{split}
&\ll  \frac1T \int_0^T \! H\bigg(\Re \frac{\z'}{\z}(\s+it) V^{-1/2}-a\bigg) \, dt 
+\frac1T\int_0^T \! H\bigg(\Re \frac{\z'}{\z}(\s+it) V^{-1/2}-b\bigg) \, dt \\ 
+&\frac1T\int_0^T \! H\bigg(\Im \frac{\z'}{\z}(\s+it) V^{-1/2}-c\bigg) \, dt
+\frac1T\int_0^T \! H\bigg(\Im \frac{\z'}{\z}(\s+it) V^{-1/2}-d\bigg) \, dt.
\end{split}
\ees
Since,
\bes
H(x)=\frac{2(1-\cos(2\pi \tOmega x))}{(2\pi \tOmega x)^2}
=\frac{2}{\tOmega^2} \int_0^{\tOmega} \! (\tOmega-u) \cos(2\pi x u) \, du,
\ees
the first term in the error above is
\bes
\ll  \frac{1}{\tOmega^2} \Re \int_0^{\tOmega} \! (\tOmega-u)  e(-a) \frac1T \int_0^T \! e\bigg(u \Re \frac{\z'}{\z}(\s+it) V^{-1/2} \bigg) \, dt du.
\ees
In the inner integral $|u| \leq \tOmega$, so
we may apply Lemma \ref{tail end} with $v=0$ to see that this is
\bes
\ll \frac{1}{\tOmega^2} \int_0^{\tOmega} \! (\tOmega-u)  \big(e^{-c u^2}+\psi(T)^{-10}\big) du  \ll 1/\tOmega.
\ees
Clearly, the other error terms can be bounded similarly.  
Hence, upon applying Fourier inversion to \eqref{meas}
we have that the left-hand side of \eqref{thm 1 star} equals
\bes 
 \intl_{\R^2} \, \widehat{F}(u)
  \widehat{G}(v) \, \frac1T \int_0^T e\(\vec u \cdot \frac{\z'}{\z}(\s+it) V^{-1/2}\) \, dt d\vec u+O(1/\tOmega).
\ees

By property $iii)$ of Lemma \ref{magic functions 2} the integral equals
\[
\int_{-\tOmega}^{\tOmega} \int_{-\tOmega}^{\tOmega} \, \widehat{F}(u)
  \widehat{G}(v) \frac1T \int_0^T e\(\vec u \cdot \frac{\z'}{\z}(\s+it) V^{-1/2}\) \, dt d\vec u.
\]
We now apply Theorem \ref{theorem 2} and Lemma \ref{tail end}
 to see that this is
\be \label{main int}
\begin{split}
 & \int_{-\Omega}^{\Omega} \int_{-\Omega}^{\Omega} \, \widehat{F}(u)
  \widehat{G}(v) (e^{-2\pi(u^2+v^2)}(1+\mathcal E_A(u,v))+\E_B) \, d\vec u \\
 & + O\bigg(\intl_{[-\tOmega, \tOmega]^2 \setminus [-\Omega, \Omega]^2}
 \! |\widehat{F}(u)|
  |\widehat{G}(v)| (e^{-c(u^2+v^2)}+\psi(T)^{-10}) \, d\vec u \bigg),
 \end{split}
 \ee
 where $\E_A(u,v)$ and $\E_B$ are as in Theorem \ref{theorem 2}.
We first estimate the integral in the $O$-term. Note that we are
 assuming that $(2\s-1)=o(1)$. This implies that 
 $\bOmega \leq \tOmega$, so that $|b-a|, |d-c| \geq \tOmega^{-1}$.
 Hence, by property $iv)$ of Lemma \ref{magic functions 2},
 the integral in the $O$-term is 
 \bes
 \begin{split}
 \ll& \intl_{[-\tOmega, \tOmega]^2 \setminus [-\Omega, \Omega]^2} 
 |b-a||d-c| (e^{-c (u^2+v^2)}+\psi(T)^{-10}) d\vec u \\
 \ll &
 |b-a||c-d| \bigg(\bigg(\int_{\Omega}^{\infty} \! e^{-c u^2} \, du\bigg)^2
 +\frac{\tOmega^2}{\psi(T)^{10}}\bigg).
 \end{split}
 \ees
 Since $\bOmega \ll \Omega^2$ and $\tOmega \leq \psi(T)^{1/2}$,
 this is easily 
 seen to be $\ll |b-a||c-d|/\bOmega$.

 We now
 write the first integral in \eqref{main int} as $I_1+I_2+I_3$,
 where $I_1$ is the integral of $\widehat{F}(u)
  \widehat{G}(v) e^{-2\pi(u^2+v^2)}$ over $[-\Omega, \Omega]^2$, $I_2$
  is the integral of 
  $  \widehat{F}(u)
  \widehat{G}(v)e^{-2\pi(u^2+v^2)}\E_A(u,v)$
  over $[-\Omega, \Omega]^2$, and
  $I_3$ is the rest.  
  Then to prove Theorem \ref{theorem 1.1} it suffices to show that
  \[
  I_1+I_2+I_3=\int_a^b \int_c^d e^{-(x^2+y^2)/2} dx dy +O((|b-a||d-c|+1)/\bOmega).
  \]
  By Theorem \ref{theorem 2},  $\E_A(u,v)\ll (|u|+|v|)^3/V^{3/2}+(u^2+v^2)/\psi(T)^{10}$ 
and $\E_B \ll  \psi(T)^{-10}$. As in our treatment of the  $O$-term 
in \eqref{main int}, we apply  property $iv)$ of Lemma \ref{magic functions 2} and find that
\be \label{I2 thm 1}
I_2 \ll |b-a||d-c|/V \ll |b-a||d-c|/\bOmega
\ee
and
\be \label{I3 thm 1}
I_3 \ll |b-a||d-c|/\psi(T)^9 \ll|b-a||d-c|/\bOmega.
\ee

To estimate $I_1$, we
extend the integral to all of $\R^2$ with a small
error that is easily seen to be $\ll |b-a||d-c|/\bOmega$. 
Next we apply Plancherel's theorem to see that
\bes 
\begin{split}
\intl_{\R^2} \widehat{F}(u)
  \widehat{G}(v) e^{-2\pi(u^2+v^2)} \, d\vec u=&
 \bigg(\intl_{\R} \widehat{F}(u)  e^{-2\pi u^2} du\bigg) 
 \bigg(\intl_{\R} \widehat{G}(v)  e^{-2\pi v^2} dv\bigg) \\
= &\frac{1}{2\pi} \bigg(\intl_{\R} F(x)  e^{-x^2/2} dx\bigg)\bigg(\intl_{\R} G(y)  e^{-y^2/2} dy\bigg).
\end{split}
\ees
By property $ii)$ of Lemma \ref{magic functions 2}
\[
\intl_{\R} F(x)  e^{-x^2/2} dx=\int_{a}^b  e^{-x^2/2} dx+O(1/\tOmega).
\]
An analogous result holds  for $G$, so we have 
\bes
\begin{split}
I_1=& \frac{1}{2\pi} \(\int_a^b e^{-x^2/2} dx +O\( \frac{|b-a||d-c|+1}{\bOmega} \)\) 
 \(\int_c^d e^{-y^2/2} dy +O\(\frac{|b-a||d-c|+1}{\bOmega}\)\)\\
=&  \frac{1}{2\pi} \int_a^b\int_c^d e^{-(x^2+y^2)/2} dx dy +O((|b-a||d-c|+1)/\bOmega).
\end{split}
\ees
This combined with \eqref{I2 thm 1} and \eqref{I3 thm 1} yields
\[
I_1+I_2+I_3=\int_a^b\int_c^d e^{-(x^2+y^2)/2} dx dy +O((|b-a||c-d|+1)/\bOmega).
\]
\subsection{The Proof of Theorem \ref{theorem disk 1}}
Let  $\psi(T)$, $\bOmega$, and $r$ be as in the statement of  Theorem \ref{theorem disk 1}.
Also, let $\vec u=(u,v)\in \R^2$, $d\vec u=dudv$, $\tOmega$ be as in Lemma \ref{tail end},
and let $\Omega$ be as in Theorem \ref{theorem 2}.
We also let $D_{r_1}(0)$ denote the disk of radius $r_1$ centered at the origin.
We now
consider $F_+(\mathbf z)$ from 
Lemma \eqref{magic functions} with $\delta=\tOmega$ 
(note $\bOmega \leq \tOmega$ so $\tOmega r \geq 1$ ).  

By Fourier inversion
\bes
\begin{split}
  &\frac1T \int_0^T \! F_+\( \Re \zpz(\s+it)V^{-1/2}, \Im \zpz(\s+it)V^{-1/2}\) \, dt \\
  = &\intl_{\R^2} \widehat F_{+}(\vec u) \, \, \frac1T \int_0^T e\(\vec u \cdot  \zpz(\s+it)V^{-1/2}\) \, dt \, d\vec u.
\end{split}
\ees
By property $iii)$ of Lemma \ref{magic functions}, this is
\bes
=\intl_{ D_{\tOmega}(0)} \widehat F_+(\vec u) \, \, \frac1T \int_0^T e\(\vec u \cdot  \zpz(\s+it)V^{-1/2}\) \, dt d\vec u,
\ees
and by Theorem \ref{theorem 2} and Lemma \ref{tail end}, this equals
\bes
\begin{split}
 \intl_{ D_{\Omega}(0)} \widehat F_+(\vec u) \, 
\Big(e^{-2 \pi^2(u^2+v^2)}(1+\E_A & (u,v))+ \E_B\Big) d\vec u\\
+&O\bigg(\intl_{\mathcal A_{\Omega, \tOmega(0)}} |\widehat F_+(\vec u)|
(e^{-c(u^2+v^2)}+\psi(T)^{-10}) d\vec u \bigg) ,
\end{split}
\ees
where $A_{\Omega, \tOmega}(0)$ is the annulus with radii $\Omega$ and $\tOmega$ centred at the origin.
By property $iv)$ of Lemma \ref{magic functions},  the integral over the annulus is
\[
\ll r^2 \intl_{\mathcal A_{\Omega, \tOmega(0)}} \(e^{-c(u^2+v^2)}+\psi(T)^{-10}\) d\vec u\ll r^2/\bOmega.
\]

We now write
\bes
\intl_{ D_{\Omega}(0)} \widehat F_+(\vec u) \, 
\(e^{-2 \pi^2(u^2+v^2)}(1+\E_A(u,v))+\E_B)\) d\vec u=I_1+I_2+I_3,
\ees
where $I_1$ is the integral of 
$\widehat F_+(\vec u)e^{-2 \pi^2(u^2+v^2)}$
 over $ D_{\Omega}(0)$,  $I_2$ is the integral of $\\ \widehat F_+(\vec u) e^{-2 \pi^2(u^2+v^2)}\E_A(u,v)$ over $ D_{\Omega}(0)$, 
and $I_3$ is the rest. By Theorem \ref{theorem 2}, $\E_A(u,v)\ll (|u|+|v|)^3/V^{3/2}+(u^2+v^2)/\psi(T)^{10}$ 
and $\E_B \ll \psi(T)^{-10} $. We first estimate $I_3$. By property $iv)$ of Lemma \ref{magic functions}
\bes
I_3 \ll r^2\tOmega^2/\psi(T)^{10}\ll r^2/\bOmega.
\ees
Similarly, we have
\bes
I_2 \ll r^2/\bOmega +r^2/\psi(T)^{10}\ll r^2/\bOmega.
\ees
To estimate $I_1$, we note that by property $iv)$ of Lemma \ref{magic functions}
\[
\intl_{ D_{\Omega}(0)} \widehat F_+(\vec u) e^{-2 \pi^2(u^2+v^2)} \, d\vec u
 =\intl_{\R^2} \widehat F_+(\vec u) e^{-2 \pi^2(u^2+v^2)} \, d\vec u+O(r^2/ \bOmega).
\]
By Plancherel's Theorem the integral on the right-hand side equals
\bes
\frac{1}{2\pi}\intl_{\R^2} F_+(\vec{x}) e^{-(x_1^2+x_2^2)/2} d\vec x.
\ees

Collecting our estimates, we   have that
\be\label{fplus id}
\begin{split}
\frac1T \int_0^T \! F_+\bigg( \Re \zpz(\s+it)V^{-1/2},& \Im \zpz(\s+it)V^{-1/2} \bigg) \, dt\\
=&\frac{1}{2\pi}\intl_{\R^2} F_+(\vec x) e^{-(x_1^2+x_2^2)/2} d \vec x
+ O(r^2/\bOmega).
\end{split}
\ee
Now, by property $i)$ of Lemma \ref{magic functions},
\bes
\begin{split}
\int_0^T \! F_-\bigg( \Re \zpz(\s+it)V^{-1/2}, & \Im \zpz(\s+it)V^{-1/2} \bigg)  \, dt \\
\leq&  
 \int_0^T \! \mathbf{1}_{{D}_{r}(0)}\( \zpz(\s+it)V^{-1/2} \) \, dt \\
  \leq & \int_0^T \! F_+\( \Re \zpz(\s+it)V^{-1/2}, \Im \zpz(\s+it)V^{-1/2} \) \, dt.
\end{split}
\ees
By this, \eqref{fplus id}, the analogue of \eqref{fplus id} for $F_{-}(u)$, and property $i)$ 
of Lemma \ref{magic functions}, we have that
\begin{equation}\notag
\begin{split}
\int_0^T \!  \mathbf{1}_{{D}_{r}(0)}\(\zpz(\s+it)\) \, dt 
=& \frac{1}{2\pi}\intl_{{D}_{r}(0)}  e^{-(x_1^2+x_2^2)/2} d\vec x \\  
 &+O\bigg(r^2/\bOmega+   \intl_{\R^2} 
\(F_+(\vec x) -  F_-(\vec x)\) e^{-(x_1^2+x_2^2)/2} \, d\vec x\bigg).
\end{split}
\end{equation}
By property $ii)$ of Lemma \ref{magic functions} the integral is $\ll r/\tOmega\ll r /\bOmega$.
The first assertion of the theorem now follows upon noting that
\[
\frac{1}{2\pi}\intl_{{D}_{r}(0)}  e^{-(x_1^2+x_2^2)/2} d\vec x=1-e^{-r^2/2}.
\]

As for the second assertion, by Fourier inversion
\bes
\begin{split}
\frac1T  \, \, \meas\bigg\{t \in (0, T) : 
\AB\frac{\z'}{\z}(\s+it) & \AB\leq \sqrt{V}  r  \bigg\} \\
 \leq &
\intl_{\R^2} \widehat F_{+}(\vec u) \, \, \frac1T \int_0^T e\(\vec u \cdot  \zpz(\s+it)V^{-1/2}\) \, dt \, d\vec u.
\end{split}
\ees
By property $iii)$ of Lemma \ref{magic functions} we may remove the portion
of the integral with $u^2+v^2 > \tOmega^2 $. We then apply Lemma \ref{tail end}
and property $iv)$ of Lemma \ref{magic functions} to see that the right-hand
side of the above inequality is
\bes
\ll\intl_{D_{\tOmega}(0)}  r^2 \( e^{-c(u^2+v^2)}+ \psi(T)^{-10} \)  \, d\vec u \ll r^2.
\ees
Noting that $r \geq 1/\tOmega$, we obtain the result.

\subsection*{Acknowledgments}
This article is part of the author's PhD thesis, which was supervised by Prof. Steven Gonek. I would like 
to thank Prof. Gonek for his guidance and support.

\end{document}